\newtheorem{thm}{Theorem}
\newtheorem{conj}{Conjecture}
\newtheorem{lem}{Lemma}
\newtheorem{defi}{Definition}
\newtheorem{question}{Question}
\newtheorem{claim}{Claim}
\newtheorem{corr}{Corollary}
\theoremstyle{remark}
\newtheorem*{remark}{Remark}
\providecommand{\keywords}[1]{\textbf{\textit{Keywords ---}} #1}
\begin{document} 

\title{New bounds on Simonyi's conjecture}

\author{Daniel Soltész \thanks{Research partially supported by the Hungarian Foundation for Scientific Research Grant (OTKA) No. 108947} \\ solteszd@math.bme.hu \\ Department of Computer Science and Information Theory, \\ Budapest University of Technology and Economics}

\maketitle

\begin{abstract}

We say that a pair $(\mathcal{A},\mathcal{B})$ is a recovering pair if $\mathcal{A}$ and $\mathcal{B}$ are set systems on an $n$ element ground set, such that for every $A,A' \in \mathcal{A}$ and $B,B' \in \mathcal{B}$ we have that ($A \setminus B = A' \setminus B'$ implies $A=A'$) and symmetrically ($B \setminus A = B' \setminus A'$ implies $B=B'$). G. Simonyi conjectured that if $(\mathcal{A},\mathcal{B})$ is a recovering pair, then $|\mathcal{A}||\mathcal{B}|\leq 2^n$. For the quantity $|\mathcal{A}||\mathcal{B}|$ the best known upper bound is $2.3264^n$ due to Körner and Holzman. In this paper we improve this upper bound to $2.284^n$. Our proof is combinatorial.

\end{abstract}

\keywords{sandglass, recovering pair, cancellative}

\section{Introduction}

The main subject of this paper is Simonyi's conjecture.
\begin{conj} [Simonyi] \cite{ahlswedesimonyi} \label{Simonyi}
Let $\mathcal{A}$ and $\mathcal{B}$ be set systems on an $n$ element ground set. If for every $A,A' \in \mathcal{A}$ and $B,B' \in \mathcal{B}$ we have that 
$$A \setminus B = A' \setminus B' \Rightarrow A=A'$$ 
$$B \setminus A = B' \setminus A' \Rightarrow B=B'$$
then $|\mathcal{A}||\mathcal{B}| \leq 2^n.$
\end{conj}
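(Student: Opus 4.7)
The plan is to use the entropy method, combined with a shifting reduction. Draw $(A, B)$ uniformly from $\mathcal{A} \times \mathcal{B}$, so that $A$ and $B$ are independent and $H(A, B) = \log_2(|\mathcal{A}||\mathcal{B}|)$. Set $X = A \setminus B$ and $Y = B \setminus A$. The recovering hypothesis asserts that $A$ is a deterministic function of $X$ alone and $B$ is a deterministic function of $Y$ alone; conversely, $A = (X \cup B) \setminus Y$ and $B = (Y \cup A) \setminus X$ show that $(A, B)$ is in turn determined by $(X, Y)$. Hence $H(A, B) = H(X, Y)$, and since $(X, Y)$ ranges over pairs of disjoint subsets of $[n]$, of which there are $3^n$, we immediately obtain $|\mathcal{A}||\mathcal{B}| \leq 3^n$.

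To descend from $3^n$ toward the conjectured $2^n$, I would first try to show that a simultaneous down-shift preserves the recovering property: for each coordinate $i$, replace $A \in \mathcal{A}$ (resp.\ $B \in \mathcal{B}$) by $A \setminus \{i\}$ (resp.\ $B \setminus \{i\}$) whenever this does not collide with a set already in the family. Checking that the recovering hypothesis survives each shift reduces to verifying that any post-shift equality $A \setminus B = A' \setminus B'$ pulls back to an equality before shifting; the case analysis is standard but requires care. Iterating, we may assume both $\mathcal{A}$ and $\mathcal{B}$ are downward-closed. I would then decompose $H(X, Y) = \sum_{i=1}^{n} H(X_i, Y_i \mid X_{<i}, Y_{<i})$ and attempt to show that each conditional term is at most $1$. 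The intuition is that downward closure, together with the reconstruction functions $f$ and $g$ (where $A = f(X)$ and $B = g(Y)$), should force at least one of the three local states $(X_i, Y_i) \in \{(0,0),(1,0),(0,1)\}$ to be forbidden for any given prefix.

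The hard part will be exactly this per-coordinate claim. Disjointness of $X$ and $Y$ on its own already permits $H(X_i, Y_i) = \log_2 3$ at the uniform distribution over the three admissible states, so the improvement to $1$ must come from a genuinely global consequence of the constraint that $A \in \mathcal{A}$ and $B \in \mathcal{B}$ appear simultaneously, and not merely that they are disjoint. The gap between the best known constants --- K\"orner--Holzman's $2.3264^n$ and the $2.284^n$ proved in this paper --- and the target $2^n$ strongly suggests that any successful proof will need either a structural classification of extremal recovering pairs (presumably $\mathcal{A} = 2^S$ and $\mathcal{B} = 2^{[n]\setminus S}$ for some $S \subseteq [n]$) or a new correlation inequality coupling the reconstruction functions $f$ and $g$ across coordinates, rather than processing coordinates one at a time.
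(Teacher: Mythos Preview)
The statement you are attempting is Simonyi's conjecture, which is \emph{open}; the paper does not prove it. What the paper establishes is the weaker bound $|\mathcal{A}||\mathcal{B}| \leq 2.284^n$, via a combinatorial argument that groups pairs $(A_i,B_j)$ by their union and bounds the number of solutions to $A_i\cup B_j=C$ (the function $f(n)$), then refines this with a second bound exploiting large intersections. There is therefore no ``paper's own proof'' of the conjecture to compare against.

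Your proposal is candid about this: you correctly recover the easy $3^n$ bound (essentially the paper's Claim~1, rephrased in entropy language), and you then sketch a two-step plan --- shift to down-closed families, then bound each conditional entropy $H(X_i,Y_i \mid X_{<i},Y_{<i})$ by $1$ --- while explicitly flagging the second step as the hard part that you do not know how to do. That honesty is appropriate, but the write-up should not be presented as a proof; it is a research plan with two substantial gaps.

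On the first gap: the claim that down-shifting preserves the recovering property is not standard and is not verified here. The recovering condition couples $\mathcal{A}$ and $\mathcal{B}$ through differences $A\setminus B$, and a shift on $\mathcal{A}$ alone can create new coincidences $A\setminus B = A'\setminus B'$ that were absent before; the ``standard case analysis'' you allude to is not obviously routine in this setting, and I would not assume it goes through without a concrete argument. On the second gap: as you yourself note, disjointness of $X$ and $Y$ permits conditional entropy up to $\log_2 3$, so the drop to $1$ must come from a global structural constraint you have not identified. The fact that the best known bound is still $2.284^n$ is strong evidence that no per-coordinate argument of this shape is currently available. In short, the proposal does not close the conjecture, and both of its nontrivial steps remain open.
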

A pair of set systems $(\mathcal{A},\mathcal{B})$ that satisfy the conditions of the conjecture is called a recovering pair. We define the size of a pair $(\mathcal{A},\mathcal{B})$ to be the quantity $|\mathcal{A}||\mathcal{B}|$. If Conjecture \ref{Simonyi} is true, it is best possible as one can take an arbitrary set $C \subseteq [n]$, and let $\mathcal{A}$ be every set contained in $C$ and $\mathcal{B}$ be every set contained in the complement of $C$. The best known upper bound on the size of a recovering pair is $2.3264^n$ due to Körner and Holzman \cite{kornerholzman}, for the proof they use the subadditivity of the entropy function. Their proof assumes only that 
$$A \setminus B = A' \setminus B \Rightarrow A=A'$$ 
$$B \setminus A = B' \setminus A \Rightarrow B=B'.$$
A pair of set systems satisfying these conditions is called cancellative. There are cancellative pairs of size larger than $2^n$, see \cite{kornerholzman}. Another weakening of the condition is if we only assume that for the pair $(\mathcal{A},\mathcal{B})$ we have that $A \setminus B = A' \setminus B' \Rightarrow A=A'$, but we do not assume the dual condition. The size of such a 'half recovering' pair can also be larger than $2^n$ see \cite{ahlswedesimonyi}. Conjecture \ref{Simonyi} was verified up to $n=8$ in \cite{dustin}. There is a lattice version of Conjecture \ref{Simonyi}, which roughly asserts that if instead of the boolean lattice we consider a lattice that is the product of chains, a similar construction is optimal. For details see \cite{ahlswedesimonyi}, one can also find results about the lattice version in  \cite{simonyisali} \cite{related} \cite{csakany}. There was an unpublished question of Aharoni, that instead of the size of the recovering pair,  $ |\mathcal{A}|| \mathcal{B}|=\sum_{\substack{A_i \in \mathcal{A} \\ B_j \in \mathcal{B}}}1$, can the  quantity $\sum_{\substack{A_i \in \mathcal{A} \\ B_j \in \mathcal{B}}} 2^{|A_i \cap B_j |}$ be also bounded by $2^n$. This was not conjectured, it was more of an invitation to produce a counterexample \cite{private}. In this paper we present such a counterexample, see Corollary \ref{counteraharoni}. In the case when for a fixed $k$ we have that $|A_i|=|B_j|=k$ for every  $ A_i \in \mathcal{A}$ and $B_j \in \mathcal{B}$ and $n$ is sufficiently large, Simonyi and Sali proved that Conjecture \ref{Simonyi} is true, see \cite{simonyisali}. Their result is very close to the general case, we will show that the case where $|A_i|=|B_j|=c*n$ is equivalent to the general case.

This paper is organized as follows. In the first section we present a new combinatorial approach. This is not enough to improve the Körner Holzman bound, but it is very short, it significantly improves the trivial $3^n$ bound. Here we present the example that answers Aharoni's question. In the second section we fine tune our approach and introduce a second upper bound. In the third section we show that the two bounds combined yield an improvement of $2.284^n.$

\section{Preliminaries and a new proof for a weaker bound}

Let us first present the easy upper bound of $3^n$, and some motivation for Aharoni's question. 
\begin{claim}If $(\mathcal{A},\mathcal{B})$ is a recovering pair, then $$|\mathcal{A}||\mathcal{B}| \leq 3^n.$$
\end{claim}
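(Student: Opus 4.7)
The plan is to exhibit an injection from $\mathcal{A}\times\mathcal{B}$ into the set of ordered pairs of disjoint subsets of $[n]$, which has cardinality $3^n$, since each element of $[n]$ lies in the first set, the second set, or neither.

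Concretely, I would define the map
\[
\phi : \mathcal{A} \times \mathcal{B} \longrightarrow 2^{[n]} \times 2^{[n]}, \qquad \phi(A,B) \;=\; (A\setminus B,\; B\setminus A).
\]
Clearly $A\setminus B$ and $B\setminus A$ are disjoint, so the image lands inside the family of disjoint ordered pairs. Injectivity is exactly what the recovering hypothesis gives: if $\phi(A,B)=\phi(A',B')$, then $A\setminus B = A'\setminus B'$ forces $A=A'$ by the first implication, while $B\setminus A = B'\setminus A'$ forces $B=B'$ by the dual implication.

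To finish, I would note that specifying a pair of disjoint subsets $(X,Y)$ of $[n]$ is the same as specifying, for each element of $[n]$, whether it lies in $X$, in $Y$, or in neither. This gives $3^n$ such pairs, and the injectivity of $\phi$ therefore yields $|\mathcal{A}||\mathcal{B}| \leq 3^n$.

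There is essentially no obstacle: the one-line observation is that the recovering conditions are precisely the two coordinate-wise injectivity statements needed for $\phi$ to be injective. The only thing worth flagging is that, unlike in the cancellative setting, \emph{both} implications are used here — the first recovers $A$ from the first coordinate of $\phi(A,B)$ (given that $B \in \mathcal{B}$ varies as well), and the second recovers $B$ from the second coordinate; neither alone suffices to conclude injectivity of $\phi$.
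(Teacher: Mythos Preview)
Your proof is correct and is essentially the same as the paper's: both map $(A,B)$ to $(A\setminus B,\, B\setminus A)$, use the recovering conditions to establish injectivity, and count the $3^n$ ordered pairs of disjoint subsets of $[n]$. Your write-up is just a more explicit rendering of the paper's two-line argument.
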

\begin{proof}The pairs $(A_i \setminus B_j , B_j \setminus A_i)$ are different since we can recover $A_i$ from $A_i \setminus B_j$, and $B_j$ from $B_j \setminus A_i$. But there can be at most $3^n$ pairs of disjoint sets from $[n]$. 
\end{proof} 

With a slight modification of the above proof, we can prove more.

\begin{claim} [Aharoni \cite{private}]If $(\mathcal{A},\mathcal{B})$ is a recovering pair, then 
$$|\mathcal{A}||\mathcal{B}| =  \sum_{\substack{A_i \in \mathcal{A} \\ B_j \in \mathcal{B}}}1 \leq \sum_{\substack{A_i \in \mathcal{A} \\ B_j \in \mathcal{B}}}2^{|A_i \cap B_j |} \leq 3^n.$$
\end{claim}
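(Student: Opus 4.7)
The first inequality is immediate since $2^{|A_i \cap B_j|} \geq 1$. For the second, my plan is to rewrite $\sum_{A_i, B_j} 2^{|A_i \cap B_j|}$ as the number of triples $(A_i, B_j, C)$ with $A_i \in \mathcal{A}$, $B_j \in \mathcal{B}$, and $C \subseteq A_i \cap B_j$, and then inject these triples into the set of ordered pairs of disjoint subsets of $[n]$, which has cardinality exactly $3^n$.

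The map I would use is
$$\phi(A_i, B_j, C) = \bigl(\,(A_i \setminus B_j) \cup C,\; (B_j \setminus A_i) \cup ((A_i \cap B_j) \setminus C)\,\bigr);$$
write $(X, Y)$ for the image. Intuitively, $\phi$ routes each element of $A_i \cap B_j$ to $X$ or $Y$ depending on whether it lies in $C$, sends elements of $A_i \triangle B_j$ to their natural side, and sends elements outside $A_i \cup B_j$ to neither. A direct check confirms $X \cap Y = \emptyset$, so $\phi$ does land in the $3^n$-element target set of ordered disjoint pairs.

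The heart of the proof is injectivity, and this is where the recovering hypothesis must enter. Suppose $(A, B, C)$ and $(A', B', C')$ both map to the same pair $(X, Y)$. Writing $T = (A \cap B) \setminus C$ and $T' = (A' \cap B') \setminus C'$, the definitions give the disjoint decompositions $A = X \sqcup T$ and $B = Y \sqcup C$ with $T \subseteq Y$ and $C \subseteq X$, and analogously for the primed triple. A short computation, using $T' \subseteq Y \subseteq B$ to kill the $T'$ contribution, then yields the crucial identity $A' \setminus B = X \setminus C = A \setminus B$. The recovering axiom applied to the pairs $(A, B)$ and $(A', B)$ in $\mathcal{A} \times \mathcal{B}$ forces $A = A'$; a symmetric identity $B' \setminus A = Y \setminus T = B \setminus A$ gives $B = B'$; and then $C = X \cap B = X \cap B' = C'$. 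The main obstacle, I expect, is spotting this cross-pairing trick: the diagonal comparison of $A \setminus B$ with $A' \setminus B'$ is equivalent to $C = C'$, which is what we are trying to prove, so one must instead compare $A \setminus B$ with $A' \setminus B$, exploiting that $T'$ lies inside $Y \subseteq B$ and hence vanishes under the subtraction.
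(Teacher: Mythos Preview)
Your proof is correct and follows the same high-level strategy as the paper: rewrite the sum as a count of triples $(A_i,B_j,S)$ with $S\subseteq A_i\cap B_j$ and inject these into ordered pairs of disjoint subsets of $[n]$. The only difference is the choice of injection. The paper uses the asymmetric map $(A_i,B_j,S)\mapsto\bigl((A_i\setminus B_j)\cup S,\;B_j\setminus A_i\bigr)$, whose injectivity is immediate: the second coordinate alone determines $B_j$ by the recovering property, then subtracting $B_j$ from the first coordinate gives $A_i\setminus B_j$ and hence $A_i$, and finally $S$ drops out. Your symmetric map $\phi(A_i,B_j,C)=\bigl((A_i\setminus B_j)\cup C,\;B_j\setminus C\bigr)$ also works, but because neither coordinate is of the form $A\setminus B$ or $B\setminus A$ on the nose, you need the cross-pairing computation $A'\setminus B=X\setminus C=A\setminus B$ to invoke the axiom. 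Both arguments are fine; the paper's buys a one-line injectivity check at the cost of a less symmetric map.
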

\begin{proof} The equality and the first inequality is trivial. The last inequality can be proven as follows. For each $A_i$ and $B_j$ and each subset $S$ of $A_i \cap B_j$, the pairs $(A_i \setminus B_j \cup S , B_j \setminus A_i)$ are different, since we can recover $B_j$ from $B_j \setminus A_i$, then from $A_i \setminus B_j \cup S$ we can subtract $B_j$ and from the result we can recover $A_i$, and if we know both $A_i$ and $B_j$, we can easily recover $S$ too. Thus there are at most $3^n$ such pairs as before and the proof is complete.  
\end{proof}

Thus with a slight refinement of the argument we could bound $ \sum_{\substack{A_i \in \mathcal{A} \\ B_j \in \mathcal{B}}}2^{|A_i \cap B_j |}$ instead of the size of the recovering pair. Note that if $ \sum_{\substack{A_i \in \mathcal{A} \\ B_j \in \mathcal{B}}}2^{|A_i \cap B_j |} \leq 2^n$ would hold, it would immediately follow that the only recovering pairs that have size $2^n$ are the ones mentioned in the introduction. We present our counterexample in the end of this section. For our new approach, we need the following definition.

\begin{defi} Let us denote by $f(n)$ the maximal number of solutions of the equation $A_i \cup B_j=[n]$ where $A_i \in \mathcal{A}$ and $B_j \in \mathcal{B}$ such that the maximum is taken over every recovering pair $(\mathcal{A},\mathcal{B})$ on a ground set of size $n$.
\end{defi}

\noindent \textbf{Observation}: Note that given a recovering pair, if $A_i \cup B_i = A_j \cup B_j$ then $A_i \neq A_j$ and $B_i \neq B_j$ otherwise we would have $B_i \setminus A_i= B_j \setminus A_i$ or the other way around. So for each $A_i$ there can be at most one $B_j$ such that $A_i \cup B_j=[n]$. Thus one can think of the solutions as disjoint pairs $(A_1,B_1), (A_2,B_2), \ldots , (A_{f(n)},B_{f(n)})$. 

\begin{lem}
$f(n) \leq \sqrt{2}^n$
\end{lem}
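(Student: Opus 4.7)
The plan is to exploit the constraint $A_i \cup B_i = [n]$, which makes the complement $[n] \setminus B_j$ coincide with the concrete set $X_j := A_j \setminus B_j$. One-sided recovering will then bound $k = f(n)$ by $2^{|X_j|}$, and the dual argument will give $k \leq 2^{|Y_j|}$; the disjointness of $X_j$ and $Y_j$ will finish the job.

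First I would fix an index $j$ and consider the map $\phi_j \colon i \mapsto A_i \setminus B_j$. Because $A_j \cup B_j = [n]$ we have $[n] \setminus B_j = X_j$, so $\phi_j(i) = A_i \cap X_j \subseteq X_j$, and the image of $\phi_j$ lies inside the $2^{|X_j|}$ subsets of $X_j$. Next I would verify that $\phi_j$ is injective: if $A_i \setminus B_j = A_{i'} \setminus B_j$, the recovering condition applied with the single choice $B = B' = B_j$ yields $A_i = A_{i'}$, and the observation preceding the lemma forces $i = i'$. This gives $k \leq 2^{|X_j|}$, equivalently $|X_j| \geq \log_2 k$. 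Running the symmetric argument with $i \mapsto B_i \setminus A_j$ and the dual recovering condition produces $|Y_j| \geq \log_2 k$, where $Y_j := B_j \setminus A_j = [n] \setminus A_j$.

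Since $X_j$ and $Y_j$ are disjoint subsets of $[n]$, $|X_j| + |Y_j| \leq n$, so adding the two inequalities gives $2 \log_2 k \leq n$, whence $f(n) = k \leq \sqrt{2}^n$. I do not foresee any real obstacle; the only subtle point is recognising that the hypothesis $A_i \cup B_i = [n]$ is exactly what identifies $[n] \setminus B_j$ with a single, explicit set $X_j$, which is precisely what converts one-sided recovering into a clean exponential bound on $k$.
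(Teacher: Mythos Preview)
Your proof is correct and follows essentially the same approach as the paper. The paper fixes a single pair $(A_1,B_1)$ with $A_1\cup B_1=[n]$, observes that all of $\mathcal{A}$ must be distinguished on $B_1^c$ and all of $\mathcal{B}$ on $A_1^c$, deduces $|\mathcal{A}||\mathcal{B}|\le 2^{|A_1^c|+|B_1^c|}\le 2^n$, and finishes via $f(n)\le\min\{|\mathcal{A}|,|\mathcal{B}|\}\le\sqrt{|\mathcal{A}||\mathcal{B}|}$; your version bounds the $k$ solution-sets $A_i$ (rather than all of $\mathcal{A}$) directly on $X_j=B_j^c$ and symmetrically on $Y_j=A_j^c$, which is the same idea with only cosmetic differences in packaging.
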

\begin{proof}
The existence of a single pair such that $A_1 \cup B_1 = [n]$ is enough to prove this. All the sets in $\mathcal{B}$ must be different on the complement of $A_1$, and similarly all the sets in $\mathcal{A}$ must be different on the complement of $B_1$. So $|\mathcal{A}||\mathcal{B}| \leq 2^{|A_1^c|+|B_1^c|} \leq 2^n$ holds in every system realizing $f(n)$. Since $f(n)$ is realized by disjoint pairs, $f(n) \leq \min \{|\mathcal{A}|,|\mathcal{B}| \} \leq \sqrt{|\mathcal{A}||\mathcal{B}|} \leq \sqrt{2}^n.$  
\end{proof}

\begin{thm} \label{trivial}
If $(\mathcal{A},\mathcal{B})$ is a recovering pair then $|\mathcal{A}||\mathcal{B}| \leq (1+\sqrt{2})^n$.
\end{thm}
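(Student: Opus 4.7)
The plan is to combine the counting argument from the easy $3^n$ bound with the lemma that $f(n)\le \sqrt 2^n$, by decomposing the product $|\mathcal{A}||\mathcal{B}|$ according to the union $A_i\cup B_j$.

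More precisely, I would first write
$$|\mathcal{A}||\mathcal{B}| = \sum_{T\subseteq [n]} N(T),\qquad N(T):= \#\{(A_i,B_j)\in \mathcal{A}\times \mathcal{B} : A_i\cup B_j = T\}.$$
For a fixed $T$, every pair counted by $N(T)$ consists of sets $A_i,B_j\subseteq T$. So if we let $\mathcal{A}_T := \{A\in \mathcal{A}:A\subseteq T\}$ and $\mathcal{B}_T := \{B\in \mathcal{B}: B\subseteq T\}$, then $(\mathcal{A}_T,\mathcal{B}_T)$ is a recovering pair on the ground set $T$ (the recovering conditions are universally quantified, so they survive restriction), and $N(T)$ is exactly the number of solutions of $A\cup B = T$ inside this recovering pair on $T$.

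The second step is to invoke the previous lemma: by definition of $f$, this gives $N(T)\le f(|T|)\le \sqrt 2^{|T|}$. Plugging this in and grouping subsets by size,
$$|\mathcal{A}||\mathcal{B}| \le \sum_{T\subseteq[n]} \sqrt 2^{|T|} = \sum_{k=0}^n \binom{n}{k} \sqrt 2^{\,k} = (1+\sqrt 2)^n,$$
by the binomial theorem, which is the desired bound.

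There is no real obstacle; the only point that needs a moment of care is that the restriction to $T$ actually yields a recovering pair so that the lemma applies, and that the observation preceding the lemma guarantees that the $f(|T|)$ bound is really available (it is the right notion for counting pairs $(A_i,B_j)$ with $A_i\cup B_j = T$, not just one of $A_i$ or $B_j$). Once those are noted, the estimate collapses cleanly into the binomial sum above.
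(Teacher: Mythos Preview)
Your proof is correct and follows essentially the same approach as the paper: decompose $|\mathcal{A}||\mathcal{B}|$ according to the union $A_i\cup B_j$, bound each term by $f(|T|)\le \sqrt{2}^{|T|}$ via the preceding lemma, and sum with the binomial theorem. You spell out the restriction argument a bit more explicitly, but the structure is identical.
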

\begin{proof} Let us count the $(A_i,B_j)$ pairs according to their unions. 
$$ |\mathcal{A}||\mathcal{B}| = \sum_{\substack{A_i \in \mathcal{A} \\ B_j \in \mathcal{B}}} 1 = \sum_{C \subseteq [n]}  |\{ (A_i,B_j) | A_i \in \mathcal{A}, B_j \in \mathcal{B} , A_i \cup B_j = C \}| \leq $$
$$ \leq  \sum_{k=0}^n \binom{n}{k} f(k) \leq \sum_{k=0}^n \binom{n}{k} \sqrt{2}^k = (1+\sqrt{2})^n. $$
\end{proof}

This proof is the starting point of our results. It relies heavily on the estimate of $f(n)$. Before proving our main result, let us present a still simple proof for a better upper bound on $f(n)$. For the upper bound we need the following lemma. 

\begin{lem} \label{cupcap}
Let $(\mathcal{A},\mathcal{B})$ be a recovering pair and $A_1, A_2 \in \mathcal{A}$ and $B_1 , B_2 \in \mathcal{B}$. If $A_1 \cup B_1= A_2 \cup B_2$ then $A_1 \cap B_1 \neq A_2 \cap B_2$.
\end{lem}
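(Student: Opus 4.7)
The plan is to prove the contrapositive: I will assume both $A_1\cup B_1=A_2\cup B_2$ and $A_1\cap B_1=A_2\cap B_2$ and derive that $A_1=A_2$ and $B_1=B_2$, so the two pairs actually coincide. Denote the common union by $C$ and the common intersection by $D$.

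The key idea is to cross the indices. The ``obvious'' within-pair differences $A_i\setminus B_i$ are both subsets of $C\setminus D$, and nothing forces them to partition $C\setminus D$ the same way, so they are the wrong quantities to compare. Instead I would look at the mixed difference $A_1\setminus B_2$. Because $A_1\subseteq C=A_2\cup B_2$ and $A_2\cap B_2=D$, a short set-theoretic manipulation should collapse $A_1\setminus B_2$ to $(A_1\cap A_2)\setminus D$, which is symmetric in the indices $1$ and $2$. The same computation with the roles of $1$ and $2$ swapped then yields $A_2\setminus B_1=(A_1\cap A_2)\setminus D$ as well.

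Consequently $A_1\setminus B_2=A_2\setminus B_1$, and the recovering property immediately forces $A_1=A_2$. The mirror argument, swapping the roles of $\mathcal{A}$ and $\mathcal{B}$, produces $B_1\setminus A_2=(B_1\cap B_2)\setminus D=B_2\setminus A_1$, giving $B_1=B_2$ by the dual recovering property. Thus the two pairs are literally the same, which is exactly the contrapositive we wanted.

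I do not foresee a serious obstacle. The only bit of creativity is noticing that one has to cross the indices---compare $A_1$ with $B_2$ rather than with $B_1$---because the within-pair differences only see how $C\setminus D$ is split, which is exactly the data that the common union and intersection leave unconstrained. Once the mixed difference is written down, everything reduces to routine manipulation using $A_1\subseteq C$ and $A_2\cap B_2=D$.
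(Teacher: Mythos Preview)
Your argument is correct and matches the paper's approach: both cross the indices to show $A_1\setminus B_2=A_2\setminus B_1$ and then invoke the recovering property. The paper presents this via a $3\times 3$ Venn-type grid rather than your explicit computation that both sides equal $(A_1\cap A_2)\setminus D$, but the underlying idea is identical.
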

\begin{proof}
Suppose to the contrary that $A_1 \cup B_1= A_2 \cup B_2$ and $A_1 \cap B_1 = A_2 \cap B_2$. We will get a contradiction by observing that $A_1 \setminus B_2 = B_1 \setminus A_2$, this is demonstrated in Figure \ref{vennproof} where the first column contains the elements in $A_1 \setminus B_1$, the second column contains the elements in $A_1 \cap B_1$ and the third one the elements in $B_1 \setminus A_1$. The meaning of the rows is similar. The fact that we do not need a complete Venn diagram with four sets follows from $A_1 \cup B_1= A_2 \cup B_2$.  The emptiness of four of the areas in the diagram follows from $A_1 \cap B_1 = A_2 \cap B_2$.

\begin{figure}
\begin{center}
\begin{tikzpicture}[scale=0.65]
\draw (0,0) -- (3,0) -- (3,3) -- (0,3) -- (0,0);
\draw (1,0) -- (1,3);
\draw (2,0) -- (2,3);
\draw (0,1) -- (3,1);
\draw (0,2) -- (3,2);
\node(A_1) at (0.5,3.5){$A_1$};
\node(cap1) at (1.5,3.5){$\cap$};
\node(B_1) at (2.5,3.5){$B_1$};

\node(A_2) at (3.5,0.5){$A_2$};
\node(cap2) at (3.5,1.5){$\cap$};
\node(B_2) at (3.5,2.5){$B_2$};

\node(ures1) at (0.5,1.5){$\emptyset$};
\node(ures2) at (1.5,0.5){$\emptyset$};
\node(ures3) at (2.5,1.5){$\emptyset$};
\node(ures4) at (1.5,2.5){$\emptyset$};

\node(x) at (0.5,0.5){$x$};
\end{tikzpicture}
\caption{$A_1 \setminus B_2= x = A_2 \setminus B_1$}
\label{vennproof}
\end{center}
\end{figure}

\end{proof}

We will use the following well known estimate of the order of magnitude of binomial coefficients. 

\begin{lem} \cite{csiszar} \label{analysis}
Let $k \in [0,1/2]$ and  $h(x)= -x \log_2(x)-(1-x)\log_2(1-x) $ the binary entropy function, then we have that  $$ \frac{1}{\sqrt{8nk(1-k)}} 2^{h(k)n} \leq \sum_{i=0}^{kn} \binom{n}{i} \leq 2^{h(k)n}. $$ 
\end{lem}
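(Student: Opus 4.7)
The plan is to handle the upper and lower bounds separately, as they call for quite different techniques.

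For the upper bound I would run a Chernoff-style comparison against a biased binomial distribution. Starting from
\[
1 \;=\; \sum_{i=0}^{n} \binom{n}{i} k^{i}(1-k)^{n-i},
\]
the hypothesis $k \leq 1/2$ gives $k/(1-k) \leq 1$, so for every integer $i$ with $i \leq kn$ the ratio $k^{i}(1-k)^{n-i}/\bigl[k^{kn}(1-k)^{(1-k)n}\bigr] = (k/(1-k))^{i-kn}$ is at least $1$, i.e. each such term is at least $2^{-h(k)n}$. Restricting the identity above to indices $i \leq kn$ and applying this termwise bound yields $2^{-h(k)n}\sum_{i=0}^{kn}\binom{n}{i} \leq 1$, which rearranges to the upper half of the lemma.

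For the lower bound I would simply keep the single largest term. Since $\binom{n}{i}$ is increasing in $i$ on $[0,n/2]$ and $k \leq 1/2$, the partial sum is at least $\binom{n}{\lfloor kn\rfloor}$. Feeding the explicit Stirling bounds $\sqrt{2\pi n}(n/e)^n \leq n! \leq \sqrt{2\pi n}(n/e)^n e^{1/(12n)}$ into the three factorials in $\binom{n}{kn}$ and simplifying, the exponential parts collapse into $k^{-kn}(1-k)^{-(1-k)n} = 2^{h(k)n}$, while the polynomial prefactors combine to $1/\sqrt{2\pi n k(1-k)}$ up to the Stirling correction $e^{O(1/n)}$. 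Since $2\pi < 8$, replacing $\sqrt{2\pi}$ by the larger $\sqrt{8}$ leaves enough headroom to absorb this correction factor.

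The main obstacle is purely bookkeeping: when $kn$ is not an integer one has to pass from $h(\lfloor kn\rfloor/n)$ back to $h(k)$ (and similarly for the factor $k(1-k)$ in the denominator), introducing corrections of order $O(1/n)$. The gap between $\sqrt{2\pi}\approx 2.507$ and $\sqrt{8}\approx 2.828$ is presumably chosen precisely so that these nuisances, together with the Stirling error $e^{1/(12n)}$, can be swallowed without any delicate estimate; no genuinely hard analytic step is required.
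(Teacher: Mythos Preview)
The paper does not give its own proof of this lemma: it is quoted verbatim from Csisz\'ar--K\"orner and immediately followed by the remark that, in every application, only the cruder fact $\binom{n}{kn}\sim 2^{h(k)n}$ is actually used. So there is nothing to compare against; your task is only to produce some correct justification.

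Your argument is the standard one and is essentially fine. The upper bound via the biased binomial identity is exactly the textbook Chernoff proof and is clean as written. For the lower bound, dropping all but the top term and invoking Stirling with the explicit error $e^{1/(12n)}$ is again the standard route, and the observation that the slack $\sqrt{8}/\sqrt{2\pi}$ is what absorbs the Stirling correction is correct when $kn$ is an integer. The one place to be a little careful is your final paragraph: when $kn\notin\mathbb{Z}$ and you replace $k$ by $k'=\lfloor kn\rfloor/n$, the discrepancy $h(k)-h(k')$ is of order $h'(k)/n$, so after multiplying by $n$ in the exponent it becomes a \emph{bounded} multiplicative factor (roughly $(1-k)/k$), not an $o(1)$ correction. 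For $k$ bounded away from $0$ this is harmless and can indeed be absorbed into the constant, but near $k=0$ it blows up and the inequality as literally stated needs $kn$ integral (which is how Csisz\'ar--K\"orner phrase it, via types). Since the paper only ever uses the exponential order of magnitude and applies the lemma at fixed $k$ after taking $r$-th powers, this subtlety is irrelevant to its applications, but you should not oversell the non-integer case as pure bookkeeping.
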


Since $h(x)$ is unimodular, it has two inverses, we will denote the increasing and the decreasing inverse of $h(x)$ by $h_i^{-1}(x)$ and $h_d^{-1}(x)$ respectively. Note that $0 \leq h_i^{-1}(x) \leq 1/2$ and $h_d^{-1}(x)=1-h_i^{-1}(x)$. In all the cases where we will use Lemma \ref{analysis}, we will only use that $ \binom{n}{kn} \sim 2^{h(k)n}$. Now we are ready to improve the upper bound on $f(n)$.

\begin{lem} \label{legjobbfn}
Let $s=s(n)$ be such that $f(n)= 2^{sn}$. Then $0 \leq 1-2s-h^{-1}_i(s)$, in particular $f(n) \leq 1.3685^n$.
\end{lem}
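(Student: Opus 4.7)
The plan is to combine Lemma~\ref{cupcap} with a pairwise refinement of the argument that gave $f(n)\leq \sqrt{2}^n$. Fix a recovering pair $(\mathcal{A},\mathcal{B})$ on $[n]$ that realizes $m:=f(n)$; by the observation, there are pairs $(A_1,B_1),\ldots,(A_m,B_m)$ with $A_j\cup B_j=[n]$ that are disjoint in each coordinate, so $m\leq \min\{|\mathcal{A}|,|\mathcal{B}|\}$ and hence $m^2\leq |\mathcal{A}||\mathcal{B}|$.

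First I would apply the argument of the previous lemma to each of these pairs separately. Because $A\setminus B_j$ determines $A\in\mathcal{A}$, sets in $\mathcal{A}$ must be distinct on $B_j^c$, and symmetrically sets in $\mathcal{B}$ must be distinct on $A_j^c$; since $A_j\cup B_j=[n]$, the complements $A_j^c$ and $B_j^c$ are disjoint with $|A_j^c|+|B_j^c|=n-|A_j\cap B_j|$. Thus
$$m^2\;\leq\;|\mathcal{A}||\mathcal{B}|\;\leq\; 2^{n-|A_j\cap B_j|}.$$
Writing $m=2^{sn}$ this reads $|A_j\cap B_j|\leq (1-2s)n$ for every $j$. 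This is the key strengthening: every union-pair now comes with a small intersection.

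Next I would invoke Lemma~\ref{cupcap}: since all unions $A_j\cup B_j$ equal $[n]$, the intersections $C_j:=A_j\cap B_j$ are pairwise distinct. Hence the $C_j$ form a family of $m$ distinct subsets of $[n]$ each of size at most $(1-2s)n$. In the main case $s\geq 1/4$ we have $1-2s\leq 1/2$, so Lemma~\ref{analysis} gives
$$2^{sn}\;=\;m\;\leq\;\sum_{k=0}^{\lfloor(1-2s)n\rfloor}\binom{n}{k}\;\leq\; 2^{h(1-2s)n},$$
that is, $s\leq h(1-2s)$. Applying the increasing branch $h_i^{-1}$ (legitimate because $1-2s\leq 1/2$) gives $h_i^{-1}(s)\leq 1-2s$, which is precisely $0\leq 1-2s-h_i^{-1}(s)$. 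The remaining case $s<1/4$ is automatic, since then $h_i^{-1}(s)\leq 1/2<1-2s$.

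The numerical bound $f(n)\leq 1.3685^n$ then follows by solving the transcendental equation $h(1-2s)=s$ for the largest feasible $s\approx 0.453$; this is a standard one-variable calculation. I do not expect a real obstacle: the proof is short once one has Lemma~\ref{cupcap}, and the only delicate point is bookkeeping around the two branches of $h^{-1}$ and verifying that the regime $1-2s\leq 1/2$ in which Lemma~\ref{analysis} is applied is exactly the non-trivial one.
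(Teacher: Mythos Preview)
Your proof is correct and follows essentially the same route as the paper: combine the distinctness of the intersections (Lemma~\ref{cupcap}) with the bound $|A_j\cap B_j|\leq (1-2s)n$ coming from the $|\mathcal{A}||\mathcal{B}|\leq 2^{|A_j^c|+|B_j^c|}$ argument, and finish with Lemma~\ref{analysis}. The only cosmetic difference is that the paper splits cases according to whether the \emph{largest} intersection size $m$ satisfies $m\leq 1/2$ or $m\geq 1/2$, whereas you split directly on $s\geq 1/4$ versus $s<1/4$; the two splits are equivalent and your version is arguably a bit cleaner.
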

\begin{proof}

By Lemma \ref{cupcap} we know that there are at least $2^{sn}$ different intersections of type $A_i \cap B_i$ where $A_i \cup B_i = [n]$. Let the pair with the largest such intersection be $(A_j,B_j)$, and let $m$ be such that $|A_j \cap B_j| =mn$. Since the sets in $\mathcal{A}$ must be different on the complement of $B_j$ and similarly the sets in  $\mathcal{B}$ must be different on the complement of $A_j$ we have that $$ 2^{2sn} = f(n)^2 \leq |\mathcal{A}|| \mathcal{B}| \leq 2^{|B_j^c|+|A_j^c|}= 2^{A_j \triangle B_j} = 2^{(1-m)n} $$
From this it follows that 
$$m \leq 1-2s. $$
If $1/2 \leq m $ we have that $s \leq 1/4$ and it is easy to check that the statement of the lemma holds as $1-2s-h^{-1}_i(s)$ is monotone decreasing in $s$ and $0<1-1/2-h_i^{-1}(1/4)$. If $m \leq 1/2$ we have that 
$$2^{sn} \leq \sum_{i=0}^{mn} \binom{n}{i} \leq  2^{h(m)n}. $$
From this it follows that 
$$h^{-1}_{i}(s) \leq m. $$
Which implies the inequality
$$ h^{-1}_{i}(s) \leq m \leq 1-2s $$
Where we have equality if $s$ is approximately $0.4525$, thus $2^s$ is smaller than $1.3685$. 
\end{proof}

\begin{corr}
The bound of Theorem \ref{trivial} can be improved from $(1+\sqrt{2})^n \approx 2.4142^n $ to $2.3685^n$.
\end{corr}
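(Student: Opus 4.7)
The plan is to repeat the argument of Theorem \ref{trivial} verbatim, but feed in the sharper estimate from Lemma \ref{legjobbfn} wherever the crude bound $f(k) \leq \sqrt{2}^k$ was used. The decomposition in the proof of Theorem \ref{trivial} groups the pairs $(A_i,B_j)$ by their union $C = A_i \cup B_j$; the number of pairs with a given union of size $k$ is at most $f(k)$ by definition (the restriction of the recovering pair to the ground set $C$ is itself a recovering pair whose surviving pairs all cover $C$). This gives the master inequality
$$ |\mathcal{A}||\mathcal{B}| \leq \sum_{k=0}^n \binom{n}{k} f(k), $$
and this step is independent of any particular estimate for $f$.

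Plugging in $f(k) \leq 1.3685^k$ from Lemma \ref{legjobbfn} and applying the binomial theorem then yields
$$ |\mathcal{A}||\mathcal{B}| \leq \sum_{k=0}^n \binom{n}{k} 1.3685^k = (1 + 1.3685)^n = 2.3685^n, $$
which is exactly the claimed improvement over $(1+\sqrt{2})^n \approx 2.4142^n$.

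There is no real obstacle here: the corollary is a direct concatenation of Theorem \ref{trivial}'s counting scheme with Lemma \ref{legjobbfn}'s bound. The only thing worth verifying is numerical, namely that the constant $2^s \leq 1.3685$ obtained from the transcendental equation $h_i^{-1}(s) = 1 - 2s$ (giving $s \approx 0.4525$) is indeed compatible with the advertised base $1 + 1.3685 = 2.3685$. Once this is checked, the proof is a single line.
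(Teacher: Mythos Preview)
Your proposal is correct and is exactly how the paper intends the corollary to be read: the paper does not even write out a separate proof, since the result follows immediately by rerunning the counting scheme of Theorem~\ref{trivial} with the bound $f(k)\le 1.3685^k$ from Lemma~\ref{legjobbfn} in place of $f(k)\le\sqrt{2}^k$. Your added remark that the restriction of $(\mathcal{A},\mathcal{B})$ to the ground set $C$ is again a recovering pair is the (implicit) justification for the step $|\{(A_i,B_j):A_i\cup B_j=C\}|\le f(|C|)$, so nothing is missing.
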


Note that this bound is still slightly weaker than that of Körner and Holzman. We present its proof because of its simplicity, and because we feel that the easiest way to improve our results is to provide a better upper bound on $f(n)$. However to demonstrate the limits of this method, we will show that $f(n)$ is exponential in $n$. Before providing a lower bound for $f(n)$, we need the following lemma, which will be heavily used during subsequent proofs.

\begin{lem}[Multiplying Lemma] \label{multiplying} Let $(\mathcal{A}_1,\mathcal{B}_1)$ and $(\mathcal{A}_2,\mathcal{B}_2)$ be recovering pairs on disjoint ground sets of size $n_1$ and $n_2$ respectively. Let $(\mathcal{A}_3,\mathcal{B}_3)$ be a set system on the union of the two ground sets, defined as follows:

$$ \mathcal{A}_3 := \{ A_i  \dot{\cup} A_j | A_i \in \mathcal{A}_1 , A_j \in \mathcal{A}_2 \}  $$
$$ \mathcal{B}_3 := \{ B_i  \dot{\cup} B_j | B_i \in \mathcal{B}_1 , B_j \in \mathcal{B}_2 \}. $$
Then $(\mathcal{A}_3,\mathcal{B}_3)$ is also a recovering pair
\end{lem}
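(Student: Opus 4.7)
The plan is to verify the two recovering conditions directly, exploiting the fact that the two ground sets are disjoint so that set operations split coordinate-wise. Write a generic element of $\mathcal{A}_3$ as $A = A_i \,\dot\cup\, A_j$ with $A_i \in \mathcal{A}_1$, $A_j \in \mathcal{A}_2$, and similarly a generic element of $\mathcal{B}_3$ as $B = B_k \,\dot\cup\, B_l$ with $B_k \in \mathcal{B}_1$, $B_l \in \mathcal{B}_2$. The central observation is that since the ground sets of $(\mathcal{A}_1,\mathcal{B}_1)$ and $(\mathcal{A}_2,\mathcal{B}_2)$ are disjoint, $A_i$ and $B_l$ are automatically disjoint, and likewise $A_j$ and $B_k$ are disjoint. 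Consequently
$$ A \setminus B \;=\; (A_i \,\dot\cup\, A_j) \setminus (B_k \,\dot\cup\, B_l) \;=\; (A_i \setminus B_k) \,\dot\cup\, (A_j \setminus B_l), $$
and analogously for $B \setminus A$.

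Next I would use this decomposition to reduce the recovering condition on $(\mathcal{A}_3,\mathcal{B}_3)$ to the two given recovering conditions. Suppose $A \setminus B = A' \setminus B'$, where $A' = A_{i'} \,\dot\cup\, A_{j'}$ and $B' = B_{k'} \,\dot\cup\, B_{l'}$. Restricting both sides to the first ground set gives $A_i \setminus B_k = A_{i'} \setminus B_{k'}$, which by the recovering property of $(\mathcal{A}_1,\mathcal{B}_1)$ forces $A_i = A_{i'}$. Restricting to the second ground set yields $A_j \setminus B_l = A_{j'} \setminus B_{l'}$, and by the recovering property of $(\mathcal{A}_2,\mathcal{B}_2)$ we conclude $A_j = A_{j'}$. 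Taking disjoint unions gives $A = A'$. The symmetric argument, swapping the roles of $\mathcal{A}$ and $\mathcal{B}$, handles the second defining condition.

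There is essentially no obstacle here: the entire content of the lemma is the observation that disjoint ground sets make set-difference factor through the two coordinates, after which the two recovering conditions on the factors apply independently. The one small piece of care needed is to verify that the elements of $\mathcal{A}_3$ and $\mathcal{B}_3$ are indexed unambiguously by pairs, i.e.\ that the disjoint-union representation $A_i \,\dot\cup\, A_j$ uniquely determines $(A_i,A_j)$; this is immediate from the disjointness of the ground sets, since restriction to either ground set recovers the corresponding factor.
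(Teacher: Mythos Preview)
Your proof is correct and follows essentially the same approach as the paper: both arguments observe that set difference splits across the two disjoint ground sets, then apply the recovering property of each factor separately after restricting to the corresponding ground set. The paper phrases it in terms of ``recovering $A_k$ from $A_k\setminus B_l$'' by intersecting with $[n_1]$ and $[n_2]$, while you phrase it as the implication $A\setminus B = A'\setminus B' \Rightarrow A=A'$, but these are the same argument.
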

\begin{proof}
Each set from the family $(\mathcal{A}_3,\mathcal{B}_3)$ consists of two parts, one from $(\mathcal{A}_1,\mathcal{B}_1)$ and the other from $(\mathcal{A}_2,\mathcal{B}_2)$. By symmetry it is enough to show that we can recover $A_k$ from $A_k \setminus B_l$ where $A_k \in \mathcal{A}_3$ and $B_l \in \mathcal{B}_3$. From $[n_1] \cap (A_k \setminus B_l)$ we can recover the part of $A_k$ which comes from $\mathcal{A}_1$ since $(\mathcal{A}_1,\mathcal{B}_1)$ is a recovering pair. Similarly from $[n_2] \cap (A_k \setminus B_l)$ we can recover the part that comes from $\mathcal{A}_2$ since $(\mathcal{A}_2,\mathcal{B}_2)$ is a recovering pair. 
\end{proof}

\begin{remark}
We have that $| \mathcal{A}_3||\mathcal{B}_3|= |\mathcal{A}_1||\mathcal{B}_1||\mathcal{A}_2||\mathcal{B}_2|$. It is also true that if there are exactly $f(n_1)$ solutions of the equation $A_i \cup B_j = [n_1]$ in $(\mathcal{A}_1,\mathcal{B}_1)$, and exactly $f(n_2)$ solutions of $A_i \cup B_j = [n_2]$ in $(\mathcal{A}_2,\mathcal{B}_2)$, then there are exactly $f(n_1)f(n_2)$ solutions of $A_i \cup B_j = [n_1+n_2]$ in $(\mathcal{A}_3,\mathcal{B}_3)$.
\end{remark}

\begin{claim}
$f(6n) \geq 3^{n} \approx 1.2009^{6n}$
\end{claim}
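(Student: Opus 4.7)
The plan is to reduce the problem to a base case on $[6]$ via the Multiplying Lemma. If I can exhibit a recovering pair on a ground set of size $6$ having $3$ covering pairs $A_i \cup B_j = [6]$, then taking $n$ disjoint copies of it through Lemma \ref{multiplying} produces a recovering pair on $[6n]$ whose number of covering pairs is $3^n$ (by the Remark following that lemma), which immediately yields $f(6n) \geq 3^n$.

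For the base case I look for a perfectly matched pair of families of size three: $\mathcal{A} = \{A_1, A_2, A_3\}$ and $\mathcal{B} = \{B_1, B_2, B_3\}$ with $A_i \cup B_i = [6]$ and $A_i \cup B_j \neq [6]$ for $i \neq j$. A concrete choice that works is
$$\mathcal{A} = \bigl\{\{1,2,3,4\},\ \{1,3,5,6\},\ \{2,4,5,6\}\bigr\},\qquad \mathcal{B} = \bigl\{\{1,2,5,6\},\ \{2,3,4,5\},\ \{1,3,4,6\}\bigr\},$$
whose matched intersections $A_i \cap B_i = \{1,2\},\{3,5\},\{4,6\}$ happen to partition $[6]$. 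Once the guess is written down, I would tabulate the nine sets $A_i \setminus B_j$ and the nine sets $B_j \setminus A_i$ and observe that all nine entries of each table are distinct subsets of $[6]$, which is strictly stronger than the recovering property. The fact that only the three diagonal pairs cover $[6]$ is an immediate check from the six remaining unions, each of which misses exactly one element.

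The delicate step is finding the base pair at all. Natural symmetric choices fail: setting $B_i = A_i^c$ reduces $A_i \setminus B_j$ to $A_i \cap A_j$, and these intersections almost always collide across pairs, destroying the recovering property. One therefore has to allow nontrivial intersections $A_i \cap B_i$ and arrange the three pieces $A_i \cap B_i$, $A_i \setminus B_i$, and $B_i \setminus A_i$ in sufficiently general position to separate the nine differences. Once a valid base example is in hand, the iteration through Lemma \ref{multiplying} is purely mechanical.
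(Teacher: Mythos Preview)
Your proposal is correct and follows essentially the same approach as the paper: reduce to the base case $f(6)\ge 3$ and then blow up via the Multiplying Lemma and its Remark. The only difference is the concrete witness on $[6]$---the paper uses $\mathcal{A}_6=\{\{1,2\}^c,\{3,4\}^c,\{5,6\}^c\}$ and $\mathcal{B}_6=\{\{2,3\}^c,\{4,5\}^c,\{6,1\}^c\}$, but your example is equally valid (I checked: all nine $A_i\setminus B_j$ and all nine $B_j\setminus A_i$ are indeed pairwise distinct, and exactly the three diagonal unions cover $[6]$).
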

\begin{proof}

It is enough to show that $3 \leq f(6)$ since multiplying such a pair with itself we get the desired bound. Let us define the recovering pair $(\mathcal{A}_{6},{\mathcal{B}_6})$ as follows $$ \mathcal{A}_6 := \{ \{1,2\}^c, \{3,4\}^c, \{5,6\}^c \} \quad \mathcal{B}_6 := \{ \{2,3\}^c, \{4,5\}^c, \{6,1\}^c \} $$ It is left to the reader to verify that this is indeed a recovering pair, and that there are three solutions of the equation $A_i \cup B_j= [6]$ where $A_i \in \mathcal{A}_6$ and $B_j \in \mathcal{B}_6$. Although we mention that it is faster to verify that the complements of the sets in $(\mathcal{A}_6,\mathcal{B}_6)$ satisfy the complementary properties of recovering systems. 
\end{proof}

\begin{corr} \label{counteraharoni}
The pair $(\mathcal{A}^{'}_6,\mathcal{B}^{'}_6) := (\mathcal{A}_6 \cup \{ \emptyset \},\mathcal{B}_6 \cup \{ \emptyset \})$ is also a recovering pair, and it answers the question of Aharoni negatively since $$ \sum_{\substack{A_i \in \mathcal{A}^{'}_6 \\ B_j \in \mathcal{B}^{'}_6}}2^{|A_i \cap B_j|} = 67 > 64 = 2^6. $$
\end{corr}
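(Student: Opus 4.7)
The plan is to prove the two separate assertions in the corollary: first, that $(\mathcal{A}'_6,\mathcal{B}'_6)$ is a recovering pair, and second, that the weighted sum evaluates to $67$.

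For the recovering pair property, I would use the fact, just established, that $(\mathcal{A}_6,\mathcal{B}_6)$ is already a recovering pair. Hence I only need to check the implications in which at least one of the four sets $A, A', B, B'$ is $\emptyset$. The critical case to rule out is: if $A \setminus B = A' \setminus B' = \emptyset$ with $A' \in \mathcal{A}_6$ and $B' \in \mathcal{B}_6$, I must deduce $A'=\emptyset$. This reduces to showing that no nonempty $A' \in \mathcal{A}_6$ is a subset of any $B' \in \mathcal{B}_6$. Since every set in both $\mathcal{A}_6$ and $\mathcal{B}_6$ has size exactly $4$, such an inclusion would force equality $A'=B'$, which a one-line inspection rules out because $\mathcal{A}_6 \cap \mathcal{B}_6 = \emptyset$. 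The dual statement follows by the symmetric argument.

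For the weighted sum, I would simply tabulate $|A_i \cap B_j|$ for each of the nine pairs in $\mathcal{A}_6 \times \mathcal{B}_6$. From $|A_i|+|B_j|=8$ and $|A_i \cup B_j| \leq 6$, the intersection size is either $2$ or $3$. The three pairs with $|A_i \cap B_j| = 2$ are precisely the pairs with $A_i \cup B_j = [6]$, i.e.\ the three pairs from the previous claim witnessing $f(6) \geq 3$; the remaining six pairs yield intersections of size $3$. Their contribution is therefore $3 \cdot 2^2 + 6 \cdot 2^3 = 60$. Each of the $|\mathcal{A}'_6||\mathcal{B}'_6|-|\mathcal{A}_6||\mathcal{B}_6|=16-9=7$ pairs involving at least one copy of $\emptyset$ has empty intersection and contributes $2^0 = 1$. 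Summing gives $60+7 = 67 > 64 = 2^6$, producing the counterexample.

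There is no real obstacle here beyond routine bookkeeping — once the explicit description of $(\mathcal{A}_6,\mathcal{B}_6)$ is in hand, the structural observation that no $4$-element set in $\mathcal{A}_6$ coincides with one in $\mathcal{B}_6$ handles the recovering property, and the intersection sizes are computed by direct inspection.
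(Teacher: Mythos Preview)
Your proposal is correct and matches what the paper expects: the corollary is stated there without proof, so it is implicitly left as a direct verification, and that is exactly what you supply. One small point to tighten: when you say ``the intersection size is either $2$ or $3$'' you have only argued $|A_i\cap B_j|\ge 2$ from $|A_i\cup B_j|\le 6$; the upper bound $|A_i\cap B_j|\le 3$ needs the observation $\mathcal{A}_6\cap\mathcal{B}_6=\emptyset$ (size-$4$ intersection would force $A_i=B_j$), which you already recorded in the recovering-pair part but should invoke here as well.
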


Note that by blowing up the pair $(\mathcal{A}_6,\mathcal{B}_6)$ we get a lower bound on $f(n)$. Thus our knowledge about $f(n)$ can be summarized as  $$1.2009^ \leq 3^{1/6} \leq \lim_{n \rightarrow \infty} (f(n))^{1/n} \leq 1.3685. $$

\section{The new upper bound} 

\subsection{The combinatorial ideas}

Now we are aiming to improve the Körner-Holzman bound. We will often multiply a recovering pair with itself, thus let us denote the $r$-fold product of $(\mathcal{A},\mathcal{B})$ with itself by $(\mathcal{A}^r,\mathcal{B}^r)$. First we prove that subexponential factors can be ignored in the upper bounds of $|\mathcal{A}||\mathcal{B}|$.

\begin{claim} \label{tensorpower}
If we have that for every recovering pair $|\mathcal{A}||\mathcal{B}| \leq g(n)c^n$ for some $c>1$ and a fixed $g(n)$ such that $g(n)$ is subexponential $(\log(g(n))=o(n))$ then for every recovering pair $|\mathcal{A}||\mathcal{B}| \leq c^n$.
\end{claim}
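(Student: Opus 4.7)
The plan is to apply the Multiplying Lemma (Lemma \ref{multiplying}) and then take an $r$-th root — the classical tensor power trick. Fix a recovering pair $(\mathcal{A},\mathcal{B})$ on a ground set of size $n$ and write $M = |\mathcal{A}||\mathcal{B}|$. The goal is to show $M \leq c^n$.

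The first step is to invoke the Multiplying Lemma iteratively: for every positive integer $r$, the $r$-fold self-product $(\mathcal{A}^r,\mathcal{B}^r)$ is a recovering pair on a ground set of size $rn$, and by the remark following that lemma
\[
|\mathcal{A}^r||\mathcal{B}^r| = M^r.
\]
Applying the hypothesis of the claim to this larger pair gives
\[
M^r \leq g(rn)\, c^{rn}.
\]
Taking $r$-th roots yields $M \leq g(rn)^{1/r}\, c^n$ for every $r \geq 1$.

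The second step is to let $r \to \infty$ while keeping $n$ fixed. Since $\log g(rn) = o(rn)$ by assumption, we have
\[
\frac{1}{r}\log g(rn) = n \cdot \frac{\log g(rn)}{rn} \longrightarrow 0,
\]
so $g(rn)^{1/r} \to 1$. Passing to the limit in $M \leq g(rn)^{1/r} c^n$ gives $M \leq c^n$, as required.

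There is no real obstacle here; the only point that needs a quick check is that the subexponential factor is genuinely killed by the $r$-th root, which is exactly what the little-$o$ hypothesis buys us. The argument explains precisely why throughout the rest of the paper one may cheerfully ignore polynomial or otherwise subexponential prefactors when chasing the optimal exponential base.
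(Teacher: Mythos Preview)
Your proof is correct and is essentially the same tensor-power argument the paper gives: the paper phrases it by contradiction (assume $|\mathcal{A}||\mathcal{B}|=d^{n}>c^{n}$ and note $d^{nr}>g(nr)c^{nr}$ for large $r$), while you take $r$-th roots and pass to the limit directly, but the content is identical.
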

\begin{proof}
Suppose that we have a recovering pair $(\mathcal{A},\mathcal{B})$ on a ground set of size $n_1$ such that $|\mathcal{A}||\mathcal{B}| > c^{n_1}$. Let $d$ be such that $|\mathcal{A}||\mathcal{B}| = d^{n_1}$. For a large enough $r$ we have a contradiction by $|\mathcal{A}^r || \mathcal{B}^r| = d^{n_1r} > g(n_1r)c^{n_1r}. $
\end{proof}

By the following lemma, we can assume some convenient properties of recovering pairs.

\begin{defi}
Let us call a recovering pair $(\mathcal{A},\mathcal{B})$ {\em uniform} if there exists a $k$ such that for any $A_i \in \mathcal{A}$ and $B_j \in \mathcal{B}$ we have that $|A_i|=|B_j|=k$, and {\em completely uniform} if it is uniform and $|\mathcal{A}|=|\mathcal{B}|$ also holds.
\end{defi}

\begin{lem} If there exists a $c>1$ such that for all $n$, we have that for any completely uniform recovering pair $(\mathcal{A}_u,\mathcal{B}_u)$ on a ground set of size $n$ we have that $|\mathcal{A}|| \mathcal{B}| \leq c^n$, then for any recovering pair on a ground set of size $n$ we have that  $|\mathcal{A}||\mathcal{B}| \leq c^n$. 
\end{lem}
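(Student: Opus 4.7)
The plan is to reduce the general case to the completely uniform case via the Multiplying Lemma together with a pigeonhole on set sizes and a symmetrisation trick. Given an arbitrary recovering pair $(\mathcal{A},\mathcal{B})$ on $[n]$, for each positive integer $r$ I would form the $r$-fold product $(\mathcal{A}^r,\mathcal{B}^r)$ on a ground set of size $rn$; by Lemma \ref{multiplying} this is a recovering pair of size $(|\mathcal{A}||\mathcal{B}|)^r$. Since the sets in $\mathcal{A}^r$ have sizes in $\{0,1,\ldots,rn\}$, pigeonhole yields an integer $k_A$ such that at least $|\mathcal{A}|^r/(rn+1)$ members of $\mathcal{A}^r$ have cardinality exactly $k_A$; choose $k_B$ on the $\mathcal{B}^r$ side analogously. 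Keeping only these selected sets gives a uniform recovering pair $(\mathcal{A}',\mathcal{B}')$ on $[rn]$ (taking subfamilies trivially preserves the recovering property), satisfying
$$|\mathcal{A}'||\mathcal{B}'| \geq \frac{(|\mathcal{A}||\mathcal{B}|)^r}{(rn+1)^2}.$$

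However $k_A$ and $k_B$ may differ, and likewise $|\mathcal{A}'|$ and $|\mathcal{B}'|$ may differ, so the pair is not yet completely uniform. To upgrade, I would invoke Lemma \ref{multiplying} once more, pairing $(\mathcal{A}',\mathcal{B}')$ on one copy of $[rn]$ with its flip $(\mathcal{B}',\mathcal{A}')$ on a disjoint copy (the flip is itself a recovering pair by the symmetry of the defining implications). The resulting pair $(\mathcal{C},\mathcal{D})$ on the $2rn$-element ground set has every set of $\mathcal{C}$ of size $k_A+k_B$ (one part of each size) and every set of $\mathcal{D}$ likewise of size $k_B+k_A$; moreover $|\mathcal{C}|=|\mathcal{D}|=|\mathcal{A}'||\mathcal{B}'|$, so $(\mathcal{C},\mathcal{D})$ is completely uniform, with
$$|\mathcal{C}||\mathcal{D}| = (|\mathcal{A}'||\mathcal{B}'|)^2 \geq \frac{(|\mathcal{A}||\mathcal{B}|)^{2r}}{(rn+1)^4}.$$

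Applying the hypothesis of the lemma to this completely uniform pair on $2rn$ elements yields
$$\frac{(|\mathcal{A}||\mathcal{B}|)^{2r}}{(rn+1)^4} \leq c^{2rn}, \qquad \text{hence} \qquad |\mathcal{A}||\mathcal{B}| \leq c^n \cdot (rn+1)^{2/r}.$$
Holding $n$ fixed and letting $r \to \infty$ makes the factor $(rn+1)^{2/r}$ tend to $1$, which gives the desired bound $|\mathcal{A}||\mathcal{B}| \leq c^n$. (Alternatively, one could simply quote Claim \ref{tensorpower} to absorb the subexponential factor.) The only genuinely delicate step is the leap from \emph{uniform} to \emph{completely uniform}, since the pigeonhole alone does not control $k_A$ versus $k_B$ or $|\mathcal{A}'|$ versus $|\mathcal{B}'|$; I expect this to be the main obstacle, and the flip-and-multiply construction above is what resolves it at the cost of only doubling the ground set.
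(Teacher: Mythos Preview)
Your proposal is correct and follows essentially the same route as the paper: take the $r$-fold product, pigeonhole on set sizes in each family to obtain a uniform pair, then multiply with the flipped pair to force complete uniformity, apply the hypothesis on the $2rn$-element ground set, and let $r\to\infty$ to kill the polynomial loss. The paper phrases this as a proof by contradiction rather than a direct estimate, but the construction and the key flip-and-multiply step are identical.
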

\begin{proof}
For the sake of contradiction assume that we have a recovering pair $(\mathcal{A},\mathcal{B})$ on a ground set of size $n_1$ such that $|\mathcal{A}||\mathcal{B}| = d^{n_1} > c^{n_1}$. Let us remove all but the sets with the most frequent size among the elements of $\mathcal{A}^r$ and $\mathcal{B}^r$ to get $(\mathcal{A}_{f}^r,\mathcal{B}_{f}^r)$. By the pigeon hole principle, there must be at least $\mathcal{A}^r/(rn_1)$ sets with the most frequent size in $\mathcal{A}^r$, and similarly at least $\mathcal{B}^r/(rn_1)$ in $\mathcal{B}^r$. Now let us multiply $(\mathcal{A}^r_f,\mathcal{B}^r_f)$ with $(\mathcal{B}_f^r,\mathcal{A}_f^r)$ to get a completely uniform recovering pair $(\mathcal{A}_{g}^r,\mathcal{B}_{g}^r)$ on a ground set of size $2n_1r$ such that $$\frac{d^{2n_1r}}{(n_1r)^4} \leq | \mathcal{A}_g^r || \mathcal{B}_g^r | \leq c^{2n_1r}$$ which is a contradiction for large $r$.
\end{proof}

From now on we will assume that the recovering pair $(\mathcal{A},\mathcal{B})$ is completely uniform. To improve the Körner-Holzman bound, we will fine tune the approach in the previous chapter. We will introduce two parameters $u,t \in [0,1]$ of a recovering pair, that will control its size $|\mathcal{A}^r||\mathcal{B}^r|$. Thus knowing that the size is large, we will gain information about the parameters. Both $t$ and $u$ are functions of the recovering pair, but since it will not cause any confusion we always omit this dependence in the notation. 

\begin{defi} 
Let $u(r)$ be defined as follows. Take the size of every union $|A_i \cup B_j|$ such that $A_i \in \mathcal{A}^r$ and $B_j \in \mathcal{B}^r$. Let $u(r)$ be such that the number that is attained the most often (if there are more such numbers pick one arbitrarily) among these union sizes be equal to $u(r)nr$. Let $$u := \lim_{r \rightarrow \infty}u(r).$$
\end{defi}  

It is easy to see that $u(r)$ converges using Hoeffding's inequality \cite{hoeffding}. Let $X^r$ denote the probability distribution that takes two sets $A_i, B_j$ from $\mathcal{A}^r$ and $\mathcal{B}^r$ uniformly at random, and attains the value $|A_i \cup B_j|/(nr)$. We can think of $u(r)$ as the mode of $X^r$, and $u$ as the expected value of $X^r$ (or just the expected value of $X^1$, it does not depend on $r$).

\begin{defi}
Let $t(r)$ be defined as follows. Average the number of solutions of the equations $A_i \cup B_j=C$ for every set $C$ of size $u(r)nr$, where $A_i \in \mathcal{A}^r$ and $B_j \in \mathcal{B}^r$. Let $t(r)$ be such that this average be equal to $2^{t(r)u(r)nr}$. Formally

$$t(r):= \frac{1}{u(r)nr} \log_2 \left( \sum_{\substack{C \subset [n] \\ |C|=u(r)nr}} \frac{|\{(A_i,B_j)| A_i \cup B_j =C, A_i \in \mathcal{A}^r, B_j \in \mathcal{B}^r\}|}{\binom{nr}{u(r)nr}} \right) \quad  t := \lim_{r \rightarrow \infty}t(r) $$
\end{defi}

The limit exists, it is easy to see this using the bounds in the subsequent proof of Theorem \ref{equality}. The definitions are motivated by the following theorem.

\begin{thm} \label{equality}
If $(\mathcal{A},\mathcal{B})$ is a recovering pair on a ground set of size $n$, then $|\mathcal{A}||\mathcal{B}| = 2^{(h(u)+ut)n}$.
\end{thm}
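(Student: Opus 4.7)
The plan is to lift $(\mathcal{A},\mathcal{B})$ to its $r$-fold product $(\mathcal{A}^r,\mathcal{B}^r)$, count its pairs in two different ways, and then take $r\to\infty$. By the Multiplying Lemma we have $|\mathcal{A}^r||\mathcal{B}^r|=(|\mathcal{A}||\mathcal{B}|)^r$, so the aim is to express this total in terms of $u(r)$ and $t(r)$ and then exploit $u(r)\to u$ and $t(r)\to t$.

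First I would partition the $(|\mathcal{A}||\mathcal{B}|)^r$ pairs $(A_i,B_j)\in\mathcal{A}^r\times\mathcal{B}^r$ according to $|A_i\cup B_j|$. This value can only take $nr+1$ distinct values, and $u(r)\cdot nr$ is defined to be the most frequent one, so if $N(r)$ denotes the number of pairs with $|A_i\cup B_j|=u(r)\cdot nr$, pigeonhole gives
$$N(r)\leq(|\mathcal{A}||\mathcal{B}|)^r\leq(nr+1)\,N(r).$$
The definition of $t(r)$ in turn rearranges to $N(r)=\binom{nr}{u(r)nr}\cdot 2^{t(r)u(r)nr}$. Combining these, taking base-$2$ logarithms, dividing by $nr$, and using Lemma \ref{analysis} to estimate $\log_2\binom{nr}{u(r)nr}=h(u(r))nr+O(\log(nr))$ gives
$$\frac{1}{n}\log_2(|\mathcal{A}||\mathcal{B}|)=h(u(r))+t(r)u(r)+O\!\left(\frac{\log(nr)}{nr}\right).$$
Since the left-hand side is independent of $r$, sending $r\to\infty$ and using $u(r)\to u$, $t(r)\to t$, together with continuity of $h$, yields $\frac{1}{n}\log_2(|\mathcal{A}||\mathcal{B}|)=h(u)+ut$, which is the theorem.

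The only point that deserves a second look is whether the polynomial factor $(nr+1)$ is truly harmless, i.e.\ whether the mode $N(r)$ captures all but a subexponential fraction of the total mass; but this is automatic from the trivial fact that there are only $nr+1$ possible union sizes, which is precisely what the pigeonhole bound uses. Existence of the limits $u$ and $t$ themselves (rather than merely a $\liminf$/$\limsup$ pair) is what really needs the concentration of the i.i.d.\ sum $X^r$ alluded to in the excerpt, but the equality itself rests on nothing deeper than this counting identity.
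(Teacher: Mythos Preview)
Your argument is correct and is essentially the same as the paper's: both partition the pairs in $(\mathcal{A}^r,\mathcal{B}^r)$ by union size, identify the contribution from the modal size $u(r)nr$ as $\binom{nr}{u(r)nr}2^{t(r)u(r)nr}$, sandwich the total between this and a polynomial multiple of it, and then take $r$-th roots and pass to the limit using Lemma~\ref{analysis}. The paper writes out the upper and lower chains separately whereas you package them into a single pigeonhole inequality, but the content is identical.
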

\begin{proof}
$$|\mathcal{A}|^r|\mathcal{B}|^r = |\mathcal{A}^r||\mathcal{B}^r| = \sum_{\substack{A_i \in \mathcal{A}^r \\ B_j \in \mathcal{B}^r}} 1 = \sum_{C \subseteq [nr]}  |\{ (A_i,B_j) | A_i \in \mathcal{A}^r, B_j \in \mathcal{B}^r , A_i \cup B_j = C \}| \leq $$ $$ \leq  nr \sum_{\substack{C \subseteq [nr] \\ |C|=u(r)nr}} |\{ (A_i,B_j) | A_i \in \mathcal{A}^r, B_j \in \mathcal{B}^r , A_i \cup B_j = C \}| = nr \sum_{\substack{C \subseteq [nr] \\ |C|=u(r)nr} } 2^{t(r)u(r)nr} = $$ $$ = nr \binom{nr}{u(r)nr} 2^{t(r)u(r)nr} \leq nr 2^{(h(u(r))+t(r)u(r))nr} $$

Taking $r$-th roots and letting $r$ tend to infinity yields $|\mathcal{A}||\mathcal{B}| \leq 2^{(h(u)+tu)n}$. For the lower bound we work similarly.

$$|\mathcal{A}|^r|\mathcal{B}|^r = |\mathcal{A}^r||\mathcal{B}^r| = \sum_{\substack{A_i \in \mathcal{A}^r \\ B_j \in \mathcal{B}^r}} 1 = \sum_{C \subseteq [nr]}  |\{ (A_i,B_j) | A_i \in \mathcal{A}^r, B_j \in \mathcal{B}^r , A_i \cup B_j = C \}| \geq $$ $$ \geq   \sum_{\substack{C \subseteq [nr] \\ |C|=u(r)nr}} |\{ (A_i,B_j) | A_i \in \mathcal{A}^r, B_j \in \mathcal{B}^r , A_i \cup B_j = C \}| = \sum_{\substack{C \subseteq [nr] \\ |C|=u(r)nr} } 2^{t(r)u(r)nr} $$  $$ =  \binom{nr}{u(r)nr} 2^{t(r)u(r)nr} \geq  \frac{1}{u(r)nr} \sum_{i=0}^{u(r)nr}\binom{nr}{i} 2^{t(r)u(r)nr} \geq $$ $$ \geq \frac{1}{u(r)nr\sqrt{8nru(r)(1-u(r))}} 2^{(h(u(r))+t(r)u(r))nr}. $$
Again taking $r$-th roots and letting $r$ tend to infinity we established the lower bound and the proof is complete.
\end{proof}

Note that the main ideas behind this last proof are essentially the same as there in the proof of Theorem \ref{trivial}, but here we have more information about the recovering pairs with large $|\mathcal{A}||\mathcal{B}|$, in terms of $u$ and $t$. It is trivial that $u \in [0,1]$ and from the inequality established in Lemma \ref{legjobbfn} it follows that $t \in [0,0.4525]$. In this region, the function $2^{h(u)+tu}$ attains a single maximum, which is by no surprise $2.3685$, but now we know that there is a single choice of parameters $u$ and $t$ at which the function can attain this value. So if we manage to push these parameters away from this location, our upper bound on $|\mathcal{A}||\mathcal{B}|$ will improve. We will do this by introducing another upper bound for $|\mathcal{A}||\mathcal{B}|$ in terms of $u$ and $t$. 
The basic idea behind the following upper bound is that if for a fixed $C$ there are many solutions of the equation $A_i \cup B_j = C$, then among all $A_0 \in \mathcal{A}$ used in these solutions there must be many small differences of type $A_0 \setminus B$. But these must be different for different $A \in \mathcal{A}$, and there is not enough space for too many small $A \setminus B$ in $[n]$. Since our recovering pair is uniform, we can find small differences by finding large intersections.

\begin{defi}
Let $c$ be the relative size of the sets in $(\mathcal{A}, \mathcal{B})$, i.e. $c$ is such that $cn$ is the size of the sets in $\mathcal{A}$ (and also in $\mathcal{B}$ as the pair is completely uniform). The size of the sets in the pair $(\mathcal{A}^r,\mathcal{B}^r)$ is exactly $cnr$. Let us define $m_{Sr}:= 2c-u(r)$ and let the symmetric intersection size be defines as  $m_S = \lim_{r \rightarrow \infty}m_{Sr}.$
\end{defi}
Note that $m_{Sr}$ is the size of the intersection of every pair of sets $A_i,B_j$, for which $|A_i \cup B_j|=u(r)nr$. We call $m_{Sr}$ the symmetric intersection size, since for a fixed set $C_0$ of size $u(r)nr$, the solutions of the equation $A_i \sup B_j = C_0$ come in pairs $(A_1,B_1)$, $(A_2, B_2)$, $\ldots$ , $(A_w,B_w)$ and among the $w^2$ possible intersections, $m_{Sr}$ denotes the size of the ones where the indices are the same. These are the smallest intersections. We are aiming to find pairs with large intersections, but the size of the smallest ones will also play an important role. The next step will be to define an asymmetric intersection size which will be strictly larger than. To do this first we need a lemma that roughly states that a large enough proportion of the sets in $\mathcal{A}$ and in $\mathcal{B}$ is used as a solution of the equation $A_i \cup B_j = C_0$ where $C_0$ is such that there are a lot of solutions of this equation.
\begin{defi}
We say that a set $C \in [nr]$ of size $u(r)nr$ is crowded if there are at least $2^{t(r) u(r) nr-1}$ solutions to the equation $A_i \cup B_j = C$ in $(\mathcal{A}^r,\mathcal{B}^r)$. If for a set $A_1 \in \mathcal{A}^r$ there is a set $B_j \in \mathcal{B}^r$ such that $A_i \cup B_j = C$ we say that $A_i$ is used in $C$.
\end{defi}


\begin{lem} \label{vaneleg}
Among the sets $A_0 \in \mathcal{A}^r$ there are at least $|\mathcal{A}^r|/(2nr)$ ones, such that they are used in a crowded $C$.
\end{lem}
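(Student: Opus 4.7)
The plan is to prove the lemma by a double-counting argument on the pairs $(A_i, B_j) \in \mathcal{A}^r \times \mathcal{B}^r$ whose union has size exactly $u(r)nr$ and is crowded. The key insight is that the definition of $t(r)$ pins down the \emph{total} number of pairs hitting size $u(r)nr$, while the definition of ``crowded'' guarantees that the \emph{bulk} of this total is concentrated on crowded $C$'s; on the other hand, each single $A_0$ can contribute at most $|\mathcal{B}^r|$ to this total. Comparing the two bounds will produce the required lower bound on the number of useful $A_0$'s.

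First I would use the definition of $t(r)$ to write
$$\sum_{\substack{C\subseteq [nr]\\ |C|=u(r)nr}}|\{(A_i,B_j)\,:\,A_i\cup B_j=C\}| \;=\; \binom{nr}{u(r)nr}\cdot 2^{t(r)u(r)nr}.$$
Next I would split this sum into crowded and non-crowded $C$'s. Since every non-crowded $C$ contributes strictly fewer than $2^{t(r)u(r)nr-1}$ pairs, and there are at most $\binom{nr}{u(r)nr}$ such $C$'s, the non-crowded contribution is bounded by $\binom{nr}{u(r)nr}\cdot 2^{t(r)u(r)nr-1}$, i.e.\ at most half of the total. Hence the crowded $C$'s collectively account for at least $\binom{nr}{u(r)nr}\cdot 2^{t(r)u(r)nr-1}$ pairs.

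Now let $N$ denote the number of $A_0 \in \mathcal{A}^r$ that are used in some crowded $C$. Every pair $(A_i,B_j)$ counted in the previous paragraph has its first coordinate among these $N$ sets, and for a fixed $A_0$ there are at most $|\mathcal{B}^r|$ choices of $B_j$. Thus
$$N\cdot |\mathcal{B}^r| \;\geq\; \binom{nr}{u(r)nr}\cdot 2^{t(r)u(r)nr-1}.$$
Finally, I would invoke the upper bound established inside the proof of Theorem~\ref{equality}, namely
$$|\mathcal{A}^r||\mathcal{B}^r| \;\leq\; nr\cdot\binom{nr}{u(r)nr}\cdot 2^{t(r)u(r)nr},$$
which rearranges to $\binom{nr}{u(r)nr}\cdot 2^{t(r)u(r)nr-1}\geq |\mathcal{A}^r||\mathcal{B}^r|/(2nr)$. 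Dividing through by $|\mathcal{B}^r|$ gives $N\geq |\mathcal{A}^r|/(2nr)$.

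The argument uses neither the recovering property nor Lemma~\ref{cupcap}; it is pure pigeonhole averaging, where $u(r)$ being the \emph{modal} union size is what lets Theorem~\ref{equality} contribute only a harmless polynomial factor of $nr$. The only thing to be careful with is the accounting in the first step: one must check that the definition of $t(r)$ really converts the average-of-solution-counts into a total via the $\binom{nr}{u(r)nr}$ factor, so that the ``half of total'' split for crowded versus non-crowded sets is clean.
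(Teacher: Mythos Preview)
Your proof is correct and follows essentially the same approach as the paper: both arguments compute the total number of pairs $(A_i,B_j)$ with $|A_i\cup B_j|=u(r)nr$ via the definition of $t(r)$, observe that non-crowded $C$'s can absorb at most half of this total, and then combine the remaining half with the inequality $|\mathcal{A}^r||\mathcal{B}^r|\le nr\binom{nr}{u(r)nr}2^{t(r)u(r)nr}$ from Theorem~\ref{equality} to bound the number of distinct first coordinates from below. Your write-up is in fact somewhat cleaner than the paper's, which phrases the same averaging step a bit more elliptically.
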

\begin{proof}
By the definition of $t(r)$, there are on average $2^{t(r) u(r) n}$ solutions for each set of size $u(r)nr$. Let us forget those $(A_i,B_j)$ pairs that have a union $C'$ of size $u(r)nr$ such that there are at most $2^{t(r) u(r) nr -1}$ solutions for $A'_i \cup B'_j=C'$. Then the average number of solutions can not decrease to less than $2^{t(r) u(r) nr-1}$. Thus at least half of the pairs which has their union of size $u(r)nr$ are used as a solution for a $C$ that has at least $2^{t(r) u(r) nr -1}$ solutions. Since by the definition of $u(r)$ and $t(r)$ we have that $$ \frac{1}{2nr}|\mathcal{A}^r||\mathcal{B}^r| \leq \binom{nr}{u(r)nr}2^{t(r)u(r)nr-1}  $$ at least $\frac{1}{2nr}|\mathcal{A}^r |$ of the sets in $\mathcal{A}^r$ have to be used to produce this many pairs. 
\end{proof}

Now we are ready to define the asymmetric intersection size. 

\begin{defi}
For each $A_j$ that is used in a crowded $C$, fix such a $C$ with solutions $(A_1,B_1), \ldots , (A_{2^{t(r)u(r)nr-1}},B_{2^{t(r)u(r)nr-1}}) \ldots$. Let $m_{A_jr}$ be such that the number that appears the most often among the numbers  $|A_j \cap B_1|, \ldots , |A_j \cap B_{2^{t(r)u(r)n}}|$ be equal to $m_{A_jr}nr$ (if there are more such numbers, pick one arbitrarily). Let $m_{Ar}$ be the number that appears the most often among the numbers $m_{A_jr}$ where $A_j$ is used in a crowded $C$ (if there are more such numbers, pick one arbitrarily). Finally we define the asymmetric intersection size  $m_A$ as  $$m_A := \liminf_{r \rightarrow \infty} m_{Ar}.$$
\end{defi}

Note that we do not know anything about the convergence of $m_{Ar}$. Our subsequent arguments work if we choose any accumulation point of the sequence $m_{Ar}$ instead of the smallest one. Later we will prove lower bounds of $m_S$ and $m_A$, but we will not need them before we are trying to quantify our results. We would like to find large intersections, to have small differences. Our last lemma before the proof of the second upper bound roughly says that for a set $A_0 \in \mathcal{A}$ we not only have intersections of size $m_A$, but there are exponentially many different such intersections, forcing exponentially many different differences.  



\begin{lem} \label{sokkul}
Let $A_1 \in \mathcal{A}^r$ be such that it is used as a solution in a crowded $C$. Then there are at least $2^{(t(r)u(r)-m_{A_1r}+m_{Sr})nr-1}$ different intersections of size $m_{A_1r}$ of type $A_1 \cap B$. 
\end{lem}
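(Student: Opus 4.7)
The plan is to count the solutions inside the crowded $C$, pigeonhole on $|A_1\cap B_i|$ to single out those with intersection of size $m_{A_1r}nr$, and then bound how many such pairs can share the same intersection $X=A_1\cap B_i$ with $A_1$.

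Since $A_1$ is used in the crowded $C$, there are at least $2^{t(r)u(r)nr-1}$ pairs $(A_i,B_i)$ with $A_i\cup B_i=C$. Because $|A_1\cap B_i|$ takes one of at most $nr+1$ integer values and $m_{A_1r}nr$ is its mode by definition, at least $\frac{1}{nr+1}\cdot 2^{t(r)u(r)nr-1}$ of these pairs satisfy $|A_1\cap B_i|=m_{A_1r}nr$; the $(nr+1)$ factor is subexponential and will be absorbed into the $-1$ in the final exponent.

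For each such $B_i$, the inclusion $B_i\subseteq C=A_1\cup B_1$ forces $B_i=(A_1\cap B_i)\sqcup Y_i$, where $Y_i:=B_i\setminus A_1\subseteq B_1\setminus A_1$. Using $|A_1\cap B_1|=m_{Sr}nr$, the set $B_1\setminus A_1$ has size $(u(r)-c)nr$, while $|Y_i|=(c-m_{A_1r})nr$, so the complementary set $(B_1\setminus A_1)\setminus Y_i$ has size $(m_{A_1r}-m_{Sr})nr$. The recovering property, applied with $A=A'=A_1$, guarantees that distinct $B_i$ give distinct $Y_i$; hence the multiplicity of a given intersection $X=A_1\cap B_i$ is exactly the number of compatible $Y_i$.

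The main technical step --- and also the main obstacle --- is to bound this multiplicity by $2^{(m_{A_1r}-m_{Sr})nr}$, since the naive subset count $\binom{(u(r)-c)nr}{(m_{A_1r}-m_{Sr})nr}$ can in general exceed this target. The sharper bound should come from exploiting the recovering-type structure of the restricted family on $B_1\setminus A_1$: for each $X$, the associated pairs $(A_i\cap(B_1\setminus A_1),\,Y_i)$ all have union $B_1\setminus A_1$ and inherit a cancellative-like constraint from the original recovering pair. Granting this multiplicity bound, dividing $\frac{1}{nr+1}\cdot 2^{t(r)u(r)nr-1}$ by $2^{(m_{A_1r}-m_{Sr})nr}$ and absorbing the polynomial factor yields at least $2^{(t(r)u(r)-m_{A_1r}+m_{Sr})nr-1}$ distinct intersections $X=A_1\cap B$ of size $m_{A_1r}nr$, as claimed.
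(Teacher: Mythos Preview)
Your outline mirrors the paper's strategy: group the solutions $(A_i,B_i)$ in the crowded $C$ by the value of $A_1\cap B_i$, bound the multiplicity $K$ of any fixed intersection $I$ of size $m_{A_1r}nr$, and divide. You are in fact more explicit than the paper about the pigeonhole step isolating the modal intersection size (the paper silently drops that polynomial factor; it is harmless downstream after taking $r$-th roots, though strictly speaking neither version yields the stated constant $-1$ exactly). The genuine gap is precisely where you say it is: you do not establish $K\le 2^{(m_{A_1r}-m_{Sr})nr}$. Asserting that the restricted pairs $(A_i\cap(B_1\setminus A_1),\,Y_i)$ ``inherit a cancellative-like constraint'' is not a proof, and restricting a recovering pair to an arbitrary subset of the ground set generally destroys the recovering property.

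The paper's resolution hinges on one observation you are missing. Fix $I$ and the indices $i_1,\dots,i_K$ with $A_1\cap B_{i_k}=I$. Then
\[
A_1\setminus I \;=\; A_1\setminus B_{i_k}\;\subseteq\; C\setminus B_{i_k}\;\subseteq\; A_{i_k},
\]
so $A_1\setminus I$ is contained in every $A_{i_k}$ and disjoint from every $B_{i_k}$. Deleting it therefore yields a genuine recovering pair $\bigl(\{A_{i_k}\setminus(A_1\setminus I)\},\{B_{i_k}\}\bigr)$ on the ground set $C\setminus(A_1\setminus I)$ of size $(u(r)-c+m_{A_1r})nr$. Now the $A_{i_k}$'s (distinct, since solutions in $C$ come in matched pairs) must already differ on the complement of $B_{i_1}$ in this smaller ground set; that complement has size $(u(r)-2c+m_{A_1r})nr=(m_{A_1r}-m_{Sr})nr$, and $K\le 2^{(m_{A_1r}-m_{Sr})nr}$ follows immediately. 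Your proposed ground set $B_1\setminus A_1$ is the wrong cut: its complement in $C$ is all of $A_1$, which is \emph{not} contained in every $A_{i_k}$; only $A_1\setminus I$ is, and that is what makes the deletion legitimate.
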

\begin{proof}

There are $2^{t(r)u(r)nr-1}$ pairs $(A_i,B_i)$ such that their union is $C$, we will use only these sets. Among these $B_i$, let $B_{i_1} , \ldots , B_{i_K}$ denote those that have the same intersection $I$ of size $m_{A_1r}$ with the set $A_1$. Consider the recovering pair that consists of these $B_{i_k}$, and the corresponding $A_{i_k}$ for which $B_{i_k} \cup A_{i_k} = C$. We claim that the system 
$$\mathcal{A}':= \{A_{i_k} \setminus ( A_1 \setminus I ) | k \in [K] \} \quad \mathcal{B}' := \{B_{i_k} | k \in [K] \} $$
is a recovering pair, on $(u(r)-c+m_{A_1r})nr$ elements, since the set $A_1 \setminus I$ is disjoint from all $B_{i_k}$ and is contained by all $A_{i_k}$. Every $A_{i_k}$ must be different on the complement of $B_{i_1}$, thus  $K \leq 2^{(u(r)-c+m_{A_1r}-c)nr}= 2^{(m_{A_1r}-m_{Sr})nr}$ and the proof is complete.

\end{proof}

\begin{thm} \label{pushing} 
If $(\mathcal{A},\mathcal{B})$ is a completely uniform recovering pair, then $$|\mathcal{A}||\mathcal{B}| \leq \binom{n}{(c-m_A)n}^2 2^{2(m_A-tu-m_S)n} \leq 2^{2(h(c-m_A)+m_A-tu-m_S)}.$$
\end{thm}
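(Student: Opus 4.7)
The plan is to exhibit many distinct differences $A \setminus B$ of a prescribed small size via Lemmas \ref{vaneleg} and \ref{sokkul}, and then to bound their number from above by the total number of subsets of that size in the ground set.

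The argument takes place inside the $r$-fold product $(\mathcal{A}^r,\mathcal{B}^r)$, whose sets have size $cnr$. Lemma \ref{vaneleg} supplies at least $|\mathcal{A}^r|/(2nr)$ sets $A_j \in \mathcal{A}^r$ used in a crowded union. A pigeon-hole on the at most $nr+1$ possible values of $m_{A_jr}$ trims these to a subfamily $\mathcal{A}'$ of size at least $|\mathcal{A}^r|/(2nr(nr+1))$ on which $m_{A_jr}=m_{Ar}$. For each such $A_j$, Lemma \ref{sokkul} produces at least $2^{(t(r)u(r)-m_{Ar}+m_{Sr})nr-1}$ distinct intersections $A_j\cap B$ of size $m_{Ar}nr$, and since $A_j$ is fixed these give the same number of distinct differences $A_j\setminus B$ of size $(c-m_{Ar})nr$. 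Crucially, the recovering property forbids $A_j\setminus B = A_{j'}\setminus B'$ whenever $A_j\neq A_{j'}$, so these differences aggregate across $\mathcal{A}'$ without collision.

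Bounding the resulting count by $\binom{nr}{(c-m_{Ar})nr}$ and solving for $|\mathcal{A}^r|$ yields
$$|\mathcal{A}^r| \leq 4nr(nr+1)\binom{nr}{(c-m_{Ar})nr}\,2^{(m_{Ar}-t(r)u(r)-m_{Sr})nr}.$$
Squaring by complete uniformity ($|\mathcal{A}^r|=|\mathcal{B}^r|$), taking $r$-th roots, and letting $r\to\infty$ along a subsequence that realises $m_A=\liminf_r m_{Ar}$ gives the first claimed inequality, with the polynomial prefactor absorbed by Claim \ref{tensorpower}. The second inequality is immediate from the entropy bound $\binom{n}{kn}\leq 2^{h(k)n}$ of Lemma \ref{analysis}.

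The main obstacle is the limit bookkeeping. Because $m_A$ is only a $\liminf$, one must pass to a subsequence $r_k\to\infty$ along which $m_{Ar_k}\to m_A$ and verify that $u(r_k), t(r_k), m_{Sr_k}$ still converge to $u,t,m_S$ along this subsequence, which is automatic since those three are proper limits. All subexponential clutter (from Lemma \ref{vaneleg}, the pigeon-hole, and the Stirling slack in Lemma \ref{analysis}) is washed out by Claim \ref{tensorpower}. The genuine combinatorial content is already packed into Lemmas \ref{vaneleg} and \ref{sokkul}: a crowded union concentrates many $B$'s sharing a large intersection with a single $A$, while distinct $A$'s automatically yield distinct small differences; Theorem \ref{pushing} is the clean packaging of these two facts into a form that can be paired with Theorem \ref{equality}.
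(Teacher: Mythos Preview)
Your argument is correct and is essentially the paper's own proof: both combine Lemma~\ref{vaneleg}, a pigeon-hole on $m_{A_jr}$, and Lemma~\ref{sokkul} to force many distinct differences of size $(c-m_{Ar})nr$, then bound by the binomial coefficient and take $r$-th roots. Your handling of the $\liminf$ by passing to a subsequence along which $m_{Ar_k}\to m_A$ is slightly more explicit than the paper's ``letting $r$ tend to infinity'', and matches the paper's remark that any accumulation point of $m_{Ar}$ would do.
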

\begin{proof}
Since the pair is completely uniform it is enough to bound $|\mathcal{A}|$. By Lemma \ref{vaneleg} we have that
$$ |\mathcal{A}|^r= |\mathcal{A}^r| \leq 2nr|\{A_i | A_i \in \mathcal{A} , A_i \text{is used in a crowded set}\}| \leq $$
$$ \leq 2n^2r^2|\{A_i | A_i \in \mathcal{A} , A_i \text{is used in a crowded set}, m_{A_ir}=m_{Ar} \}|.  $$
By Lemma \ref{sokkul}, every such $A_i$ has at least $2^{(t(r)u(r)-m_{Ar}+m_{Sr})nr-1}$ different intersections of size $m_{Ar}nr$. Thus every such $A_i$ has at least $2^{(t(r)u(r)-m_{Ar}+m_{Sr})nr-1}$ different differences of size $(c-m_{Ar})nr$, which must be different by the definition of a recovering pair. Thus we have that 
$$  2n^2r^2|\{A_i | A_i \in \mathcal{A} , A_i \text{is used in a crowded set}, m_{A_ir}=m_{Ar} \}| \leq $$ 
$$ \leq 2n^2r^2 \binom{nr}{(c-m_{Ar})nr} 2^{-(t(r)u(r)+m_{Ar}-m_{Sr})nr+1}.$$
Taking $r$-th roots and letting $r$ tend to infinity finishes the proof.

\end{proof}

Note that the proof of Theorem \ref{pushing} is rather straightforward once we have the appropriate definitions. Intuitively speaking we feel that the whole argument is about that we would like to have a large subset of sets in $\mathcal{A}$ such that they have small differences with some set from $\mathcal{B}$, and since all these must be different, there can not be too many of them, as there is not enough space. It is not the proof of Theorem \ref{pushing} that is important, but the fact that we can prove effective bounds on the parameters used there. To show that Theorem \ref{equality} and Theorem \ref{pushing} together improve on the Körner-Holzman bound, we will present lower bounds on $m_S$ and $m_A$ in the next section.

\section{Quantifying the upper bound}

To improve the Körner-Holzman bound, we need the following lower bounds on $m_S$ and $m_A$. 

\begin{lem} \label{msbecsles}
$h_i^{-1}(t) u \leq m_S.$ 
\end{lem}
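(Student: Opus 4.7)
The plan is to bound, for a single set $C$ with many solutions, the number of pairs $(A_i,B_i)$ satisfying $A_i\cup B_i=C$ by counting the pairwise-distinct intersections $A_i\cap B_i$, all of which lie inside $C$ and share a fixed size. The key input will be Lemma~\ref{cupcap}, which supplies the injectivity, and Lemma~\ref{analysis}, which converts the resulting binomial bound into an entropy statement.

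More concretely, I would fix $r$ and choose a set $C\subseteq[nr]$ of size $u(r)nr$ having at least $2^{t(r)u(r)nr}$ solutions to $A_i\cup B_i=C$ in $(\mathcal{A}^r,\mathcal{B}^r)$; such a $C$ exists because by the definition of $t(r)$ the average number of solutions over all sets of size $u(r)nr$ is exactly $2^{t(r)u(r)nr}$ (in particular any such $C$ is crowded). Let $(A_1,B_1),\ldots,(A_w,B_w)$ be these solutions, so $w\ge 2^{t(r)u(r)nr-1}$. Complete uniformity gives $|A_i|=|B_i|=cnr$, and combined with $|A_i\cup B_i|=u(r)nr$ inclusion-exclusion yields $|A_i\cap B_i|=(2c-u(r))nr=m_{Sr}nr$ for every $i$. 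By Lemma~\ref{cupcap} the intersections $A_i\cap B_i$ are pairwise distinct subsets of $C$ of the common size $m_{Sr}nr$, so
$$
2^{t(r)u(r)nr-1}\;\le\; w \;\le\; \binom{u(r)nr}{m_{Sr}nr} \;\le\; 2^{h(m_{Sr}/u(r))\,u(r)nr},
$$
where the last step invokes Lemma~\ref{analysis} (using $h(x)=h(1-x)$ in case $m_{Sr}>u(r)/2$).

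To finish, I would divide by $nr$ and let $r\to\infty$, producing $tu\le h(m_S/u)\,u$, i.e.\ $t\le h(m_S/u)$. If $m_S/u\le 1/2$ then applying the increasing branch $h_i^{-1}$ monotonically gives $h_i^{-1}(t)\le m_S/u$, hence $h_i^{-1}(t)\,u\le m_S$. If instead $m_S/u>1/2$ then $h_i^{-1}(t)\le 1/2<m_S/u$ and the claim is immediate. I do not anticipate a genuine obstacle here: once Lemma~\ref{cupcap} is in hand the argument is a single counting step plus a standard entropy estimate, and the only mild subtlety is the case split needed to invert $h$ on its increasing branch.
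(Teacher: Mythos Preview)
Your proposal is correct and follows essentially the same route as the paper: pick a set $C$ of size $u(r)nr$ with at least the average number of solutions, use Lemma~\ref{cupcap} to get distinct intersections of the fixed size $m_{Sr}nr$ inside $C$, bound by $\binom{u(r)nr}{m_{Sr}nr}$, apply Lemma~\ref{analysis}, and pass to the limit. Your explicit case split on whether $m_S/u\le 1/2$ when inverting $h$ is a nice touch that the paper leaves implicit in writing the two-sided inequality $h_i^{-1}(t)u\le m_S\le h_d^{-1}(t)u$.
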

\begin{proof}
Since the average number of solutions of $A_i \cup B_j= C$ where $A_i, B_j$ are from $\mathcal{A},\mathcal{B}$ and $|C|=u(r)n$ is $2^{t(u)u(r)n}$,  there must be at least one set $C'$ such that there are at least $2^{t(u)u(r)n}$ solutions. Let us fix such a $C'$. By Lemma \ref{cupcap}, for each solution $(A_i,B_i)$ of the equation $A_i \cup B_j= C'$, we have that $A_i \cap B_i$ is unique. Thus 
$$ 2^{t(u)u(r)n} \leq \binom{u(r)n}{m_{Sr}n} = \binom{u(r) n}{\frac{m_{Sr}}{u(r)}u(r)n} \leq 2^{h\left(\frac{m_{Sr}}{u(r)}\right)u(r)n}  $$
$$ t(u)u(r) \leq h \left(\frac{m_{Sr}}{u(r)}\right)u(r) $$
$$ h_i^{-1}(t(u))u(r) \leq m_{Sr} \leq h_d^{-1}(t(u))u(r) $$
and by taking $r$ to infinity we obtain 
$$ h_i^{-1}(t)u \leq m_{S} $$
and the proof is complete.
\end{proof}

\begin{lem} \label{mabecsles}
$$ c-h_d^{-1} \left( \frac{tu}{u-c} \right) (u-c) \leq m_A $$
\end{lem}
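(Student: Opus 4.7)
The plan is to run a counting argument for a single $A_j \in \mathcal{A}^r$ used in some crowded $C$, in the spirit of Lemma~\ref{sokkul} but tracking the differences $B_i \setminus A_j$ rather than the intersections $A_j \cap B_i$. Fix such an $A_j$; since it appears as the first coordinate of some solution $A_j \cup B_k = C$, we have $A_j \subseteq C$, hence $|C \setminus A_j| = (u(r)-c)nr$. Crowdedness supplies at least $2^{t(r)u(r)nr - 1}$ pairs $(A_i,B_i)$ with $A_i \cup B_i = C$, and the Observation in Section~2 forces the $B_i$ to be pairwise distinct subsets of $C$ of size $cnr$; in particular each intersection $|A_j \cap B_i|$ lies in $[m_{Sr}nr, cnr]$ because $B_i \setminus A_j \subseteq C \setminus A_j$.

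Next, pigeonhole the at most $cnr + 1$ possible values of $|A_j \cap B_i|$: by the definition of $m_{A_jr}$, at least $2^{t(r)u(r)nr-1}/(cnr+1)$ of the $B_i$ satisfy $|A_j \cap B_i| = m_{A_jr}nr$, equivalently $|B_i \setminus A_j| = (c - m_{A_jr})nr$. The recovering condition with the common first coordinate $A_j$ is nothing but injectivity of $B \mapsto B \setminus A_j$ on $\mathcal{B}^r$, so these difference sets are pairwise distinct subsets of $C \setminus A_j$. Lemma~\ref{analysis} therefore gives
\begin{equation*}
\frac{2^{t(r)u(r)nr - 1}}{cnr + 1} \;\leq\; \binom{(u(r)-c)nr}{(c - m_{A_jr})nr} \;\leq\; 2^{\,(u(r)-c)\,h\!\left(\frac{c - m_{A_jr}}{u(r)-c}\right)nr},
\end{equation*}
whence $h\!\left(\frac{c - m_{A_jr}}{u(r)-c}\right) \geq \frac{t(r)u(r)}{u(r)-c}$ up to a subexponential correction that vanishes when we take $nr$-th roots. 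Since $(c - m_{A_jr})/(u(r)-c) \in [0,1]$, unimodularity of $h$ forces $(c - m_{A_jr})/(u(r)-c) \leq h_d^{-1}\!\left(\frac{t(r)u(r)}{u(r)-c}\right)$, i.e.\ $m_{A_jr} \geq c - h_d^{-1}\!\left(\frac{t(r)u(r)}{u(r)-c}\right)(u(r)-c)$. The inequality holds for every $A_j$ used in a crowded $C$, so it passes to $m_{Ar}$; taking liminf as $r \to \infty$ and using continuity of $h_d^{-1}$ yields the claimed bound on $m_A$.

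The main conceptual step is recognizing that once $A_j$ is fixed, the recovering axiom collapses to plain injectivity of the map $B \mapsto B \setminus A_j$, which converts the problem into ``how many subsets of $C \setminus A_j$ of size $(c - m_{A_jr})nr$ can there be?'' After that, the choice of the \emph{decreasing} branch $h_d^{-1}$ and the bookkeeping with the ground set of size $(u-c)nr$ are essentially forced by the fact that $B_i \setminus A_j$ is constrained to live in $C \setminus A_j$; the rest is routine binomial asymptotics via Lemma~\ref{analysis}.
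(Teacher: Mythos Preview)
Your argument is correct and is essentially the paper's own proof: fix $A_j$ used in a crowded $C$, observe that the $B_i$'s are pairwise distinct on $C\setminus A_j$ (which has size $(u(r)-c)nr$), pigeonhole on the size $|B_i\setminus A_j|$, bound the most popular layer by a binomial coefficient, and invert via $h_d^{-1}$ in the limit. The only cosmetic differences are that the paper phrases everything in terms of $|B_i\setminus A_j|$ rather than $|A_j\cap B_i|$ and uses the polynomial factor $(u(r)-c)nr$ instead of your $cnr+1$, neither of which affects the outcome.
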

\begin{proof}
Fix a set $A_j$ that is used in a crowded $C$. Since all the sets $B_1, \ldots , B_{2^{t(r)u(r)nr-1}}$ that are used in $C$ are different outside of $A_j$, they must differ on a set of size $(u(r)-c)nr$. The most frequent size among $ |B_1 \setminus A_j|, \ldots , |B_{2^{t(r)u(r)nr-1}} \setminus A_j|$ is by the definition of $m_{A_jr}$ equal to $(c-m_{A_jr})nr$. Thus 

$$2^{t(r)u(r)nr-1} \leq  (u(r)-c)nr \binom{(u(r)-c)nr}{(c-m_{A_jr})nr} \leq (u(r)-c)nr 2^{h\left( \frac{c-m_{A_jr}}{u(r)-c} \right) (u(r)-c)nr } $$

for all $r$ and all $A_j$ thus 

$$tu \leq h\left(  \frac{c-m_{A}}{u-c} \right)(u-c) $$
holds. After a simple rearrangement, the statement of the lemma follows:
$$h_i^{-1} \left( \frac{tu}{u-c} \right) (u-c) \leq c-m_A \leq h_d^{-1} \left( \frac{tu}{u-c} \right) (u-c)$$
$$ c-h_d^{-1} \left( \frac{tu}{u-c} \right) (u-c) \leq m_A $$
\end{proof}

Now we proceed with the proof of the following claim, that finishes our proof, that for any recovering pair  $|\mathcal{A}||\mathcal{B}| \leq 2.284^n$.

\begin{claim}
For any $u \in [0,1]$ and $t \in [0,0.4525]$ we have that $$ \min \{ 2^{(h(u)+tu)n}, 2^{2(h(c-m_A)+m_A-tu-m_S)n}  \} \leq 2.284^n $$ or equivalently
$$ h(u)+tu \leq 1.1922 \quad \text{or} \quad h(c-m_A)+m_A-tu-m_S \leq 0.5961 . $$
\end{claim}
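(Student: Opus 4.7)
The plan is to treat the statement as a case-analysis in the parameters. The first alternative $h(u) + tu \le 1.1922$ gives $2^{(h(u)+tu)n} \le 2^{1.1922n} < 2.284^n$ directly, so I would focus on the complementary region $\mathcal{R} := \{(u,t) \in [0,1] \times [0, 0.4525] : h(u) + tu > 1.1922\}$ and aim to prove that the second expression $\Phi(u,t,c) := h(c - m_A) + m_A - tu - m_S$ is at most $0.5961$ throughout $\mathcal{R}$, where $c$, $m_S$, $m_A$ are viewed as auxiliary quantities constrained by the earlier lemmas.

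The next step is to eliminate $m_S$ and $m_A$ in favor of $u, t, c$ using Lemmas \ref{msbecsles} and \ref{mabecsles}. Since $m_S$ enters $\Phi$ with a negative sign, the lower bound $m_S \ge h_i^{-1}(t)\,u$ directly upper-bounds $\Phi$. For $m_A$, I would set $\delta := c - m_A$ so that $h(c - m_A) + m_A = h(\delta) + c - \delta$; Lemma \ref{mabecsles} then gives $\delta \le (1 - h_i^{-1}(tu/(u-c)))(u - c)$. The map $\delta \mapsto h(\delta) - \delta$ is unimodal with maximum at $\delta = 1/3$, so I would either case-split on whether this upper bound on $\delta$ exceeds $1/3$ or argue that the worst case occurs at one of the endpoints of the feasible $\delta$-interval. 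In addition, the identity $m_S = 2c - u$ together with $m_S \ge h_i^{-1}(t)\,u$ forces $c \ge \tfrac{1}{2}u(1 + h_i^{-1}(t))$, and trivially $c \le u$.

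After this reduction, the claim becomes an explicit inequality for a real-analytic function of $(u, t, c)$ on a compact region cut out by the constraint $h(u) + tu > 1.1922$ together with the bounds on $c$. The main obstacle, and the reason for the precise constants $1.1922$ and $0.5961$, is that this is a genuine three-variable constrained optimization with nested binary entropy functions and no visible closed-form critical point. I would verify it in two stages: first, show along the boundary curve $h(u) + tu = 1.1922$ that the supremum of $\Phi$ (after the substitutions above) is strictly below $0.5961$, since this is the most dangerous locus where the first bound has the least slack; then use a monotonicity or gradient argument to reduce the interior of $\mathcal{R}$ to this boundary, or else verify the inequality on a fine grid combined with a Lipschitz estimate. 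The heart of the argument is really this numerical check, and I expect the proof to proceed by carefully estimating the supremum of $\Phi$ on $\mathcal{R}$ after the substitutions, with the extremal point arising near the maximizer of the first bound.
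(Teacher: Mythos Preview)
Your plan is essentially the paper's: restrict to the region where $h(u)+tu > 1.1922$, replace $m_A$ and $m_S$ by their extremal values from Lemmas~\ref{mabecsles} and~\ref{msbecsles}, and finish with a numerical check. Two points where the paper is sharper than your outline. First, $m_S = 2c - u$ is an identity, so $m_S$ and $c$ cannot be varied independently; your sentence ``since $m_S$ enters $\Phi$ with a negative sign, the lower bound directly upper-bounds $\Phi$'' is not literally correct, because lowering $m_S$ with $u$ fixed forces $c$ (and hence the $m_A$ bound) to move as well. The paper instead proves (Claim~\ref{asdasd}) that the full expression, after the $m_A$ substitution, is monotone in $m_S$, which collapses the problem to the two variables $(u,t)$ rather than three. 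Second, instead of your proposed case-split on whether $\delta = c - m_A$ exceeds $1/3$, the paper shows directly (Lemma~\ref{furcsa}) that $c - m_A < 1/3$ throughout the relevant rectangle, so the worst $m_A$ is always the minimal one with no branching. For the final numerical step the paper does not parametrize the boundary curve $h(u)+tu = 1.1922$: it establishes that the reduced second bound is monotone decreasing in $t$ and monotone increasing in $u$, and then evaluates the two bounds alternately on a staircase of sixteen axis-aligned $(u,t)$ points. Your boundary-reduction or grid-plus-Lipschitz alternatives would also work in principle, but the monotonicity-in-both-variables observation is what makes the paper's verification finite and transparent.
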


Note that the bound $2.284$ can be improved to $2.2815$ by using more advanced computer calculations. Here we only present a proof of $2.284$ which uses a computer only to evaluate a function. 

\begin{claim} \label{easyanalysis}
For any a fixed $t \in [0,0.4525]$ we have that for $u \leq (1+2^{-t})^{-1}$ the function $h(u)+tu$ is monotone increasing in $u$ and for $u \geq (1+2^{-t})^{-1}$ is is monotone decreasing in $u$.
\end{claim}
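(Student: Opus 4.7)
The plan is to verify the claim by a direct differentiation argument; the claim really just asks us to locate the unique critical point of a concave function on $[0,1]$. First I would differentiate $h(u)+tu$ with respect to $u$. Using the definition $h(u) = -u\log_2 u - (1-u)\log_2(1-u)$, the two ``inner'' terms coming from the product rule cancel (they produce $-1/\ln 2$ and $+1/\ln 2$), so I expect
\[
\frac{d}{du}\bigl(h(u)+tu\bigr) \;=\; \log_2\!\left(\frac{1-u}{u}\right) + t.
\]
This is the standard derivative of the binary entropy plus the constant $t$.

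Next I would locate the zero of this derivative by solving $\log_2\bigl((1-u)/u\bigr) = -t$, equivalently $(1-u)/u = 2^{-t}$. Rearranging gives $u = (1+2^{-t})^{-1}$, which matches the critical value in the statement.

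Finally I would observe that $(1-u)/u$ is a strictly decreasing function of $u$ on $(0,1)$, hence $\log_2\bigl((1-u)/u\bigr) + t$ is strictly decreasing in $u$. Therefore the derivative is positive for $u < (1+2^{-t})^{-1}$ and negative for $u > (1+2^{-t})^{-1}$, giving precisely the monotonicity claimed. The assumption $t \in [0, 0.4525]$ only plays the role of ensuring that the critical point lies in $(0,1)$ (which is automatic for any real $t$), so it is not actually used in any essential way.

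There is essentially no obstacle here: the argument is one-variable calculus and the concavity of $h$ makes the monotonicity on each side of the unique critical point immediate. The only thing one has to be a little careful about is the sign bookkeeping when differentiating $h$, but that is routine.
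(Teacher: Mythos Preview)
Your proposal is correct and follows essentially the same approach as the paper: compute $h'(u)=-\log_2\!\bigl(\tfrac{u}{1-u}\bigr)$, note that it is monotone decreasing so $h(u)+tu$ is unimodal, and solve for the unique critical point $u=(1+2^{-t})^{-1}$. The paper's proof is just a terser version of exactly what you wrote.
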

\begin{proof}
The derivative of $h(x)$ is $- \log_2 (\frac{x}{1-x})$. For fixed $t$ the function $h(u)+tu$ is unimodular, since the derivative of $h(u)$ is monotone decreasing in the interval $u \in [0,1]$. For fixed $t$, the maximum of $h(u)+tu$ is at $u = (1+2^{-t})^{-1}$.
\end{proof}

We will call $h(u)+tu$ the first and $h(c-m_A)+m_A-tu-m_S$ the second bound. Since the first bound is a function of two variables, and the second is a function of four (since $c=(u+m_S)/2$) we are aiming to eliminate $m_A$ and $m_S$ from the second bound. Then we will establish certain monotonicity properties of these bounds, such that evaluating them on $16$ places will yield our claim. Note that evaluating them on more places would yield better bounds, but the improvement is in the third decimal digit.
We start our work with narrowing the range of parameters using the first bound.

\begin{claim}
Outside of the rectangle $u \in [0.4400,0.7100] \text{and} \, t \in [0.3600,0.4525]$ we have that $h(u)+tu \leq 0.5961$.
\end{claim}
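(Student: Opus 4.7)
The function $g(u,t) := h(u)+tu$ is unimodal in $u$ for each fixed $t$ by Claim \ref{easyanalysis}, with maximum at $u^\star(t) := (1+2^{-t})^{-1}$, and it is monotone increasing in $t$ for each $u \in (0,1)$. The plan is to exploit these two structural facts to reduce the claim about an infinite region to a finite boundary computation.

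First I would localize the peak: as $t$ ranges over $[0, 0.4525]$, $u^\star(t)$ ranges from $1/2$ up to roughly $0.578$, which is entirely contained in the interval $[0.44, 0.71]$. Consequently, the rectangle contains the ridge $\{(u^\star(t), t)\}$ along which $g(\cdot, t)$ attains its maximum on each horizontal cross section. The complement of the rectangle inside the strip $t \in [0, 0.4525]$ then decomposes into three natural pieces:
$$R_1 := [0, 0.44) \times [0, 0.4525], \quad R_2 := (0.71, 1] \times [0, 0.4525], \quad R_3 := [0.44, 0.71] \times [0, 0.36).$$
On $R_1$, since $u < 0.44 \le u^\star(t)$, $g(\cdot,t)$ is increasing in $u$, and since $g$ is increasing in $t$, the supremum is attained at the corner $(0.44, 0.4525)$. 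Symmetrically on $R_2$, we have $u > 0.71 \ge u^\star(t)$, so $g(\cdot,t)$ is decreasing in $u$, and the supremum is at $(0.71, 0.4525)$. On $R_3$, since $u^\star(t)$ lies inside $[0.44, 0.71]$, the sup of $g(\cdot,t)$ over $[0.44, 0.71]$ is $g(u^\star(t), t)$; monotonicity in $t$ then pushes this to the top edge $t = 0.36$, giving $g(u^\star(0.36), 0.36)$.

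The remaining step is to compare each of these three supremum values against $0.5961$ and confirm $g \le 0.5961$ pointwise on the complement. This is where the entire numerical weight of the argument sits: one substitutes the explicit coordinates into the binary entropy and verifies the inequality at three points only. The main obstacle is the thinness of the margin, since the rectangle's edges are deliberately chosen to sit close to where $g$ crosses the target value, so the check must be carried out with care rather than waved through, and any tightening of the stated threshold would force the rectangle to enlarge accordingly. Once these three evaluations are confirmed, the monotonicity and unimodality observations above immediately extend the bound from the three boundary points to the whole complement of the rectangle, and the claim follows.
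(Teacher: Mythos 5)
Your reduction is structurally the same as the paper's: both use unimodality of $h(u)+tu$ in $u$ (Claim \ref{easyanalysis}) and monotonicity in $t$ to collapse the unbounded complement of the rectangle to a few boundary evaluations. Your decomposition into $R_1, R_2, R_3$ is a more explicit bookkeeping of the paper's two-case argument (the peak over $u$ at $t=0.36$; then the two edges $u=0.44$ and $u=0.71$ at $t=0.4525$), and you correctly observe that $(1+2^{-t})^{-1}\in[0.5,0.578]\subset[0.44,0.71]$ for all $t\in[0,0.4525]$, so the ridge's $u$-coordinate never leaves the rectangle's $u$-interval.

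The problem is the numerical step you defer. The threshold $0.5961$ in the claim statement is a typo for $1.1922$ ($=\log_2 2.284$). In the preceding claim, only the second expression $h(c-m_A)+m_A-tu-m_S$ is compared to $0.5961=1.1922/2$, because it appears with a factor $2$ in the exponent; the first bound $h(u)+tu$ is compared to $1.1922$. As printed, the claim is false: already at $(u,t)=(1/2,0)$, which lies outside the rectangle, $h(u)+tu=1>0.5961$. Your three boundary values come out to approximately $1.189$, $1.190$, and $1.191$, which clear $1.1922$ by a thin margin but are nowhere near $0.5961$. Had you actually substituted even one of the three points rather than leaving all three evaluations as a deferred step, you would have caught this immediately; a proof should not end on an unchecked numerical comparison that turns out to be off by a factor of two. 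With the corrected threshold $1.1922$, your argument is complete and matches the paper's.
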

\begin{proof}
The function $h(u)+tu$ is trivially increasing in $t$ and unimodular in $u$. For $t=0.36$ by Claim \ref{easyanalysis} its maximum (as a function of $u$) is attained at $1/(1+2^{-0.3600})$ and it is less than $1.1922$ so when $t \leq 0.36$ it is smaller than $1.1922$ for every value of $u$. For $t=0.4525$ it is less than $1.1922$ outside of the interval $u \in [0.4400,0.7100]$ and we are done by unimodularity in $u$ and monotonicity in $t$. 
\end{proof}

From now on we will assume that $u \in [0.4400,0.7100] $ and $ \, t \in [0.3600,0.4525]$. The following lemma will be useful in the proofs that $m_A$ and $m_S$ should be minimized. 

\begin{lem} \label{furcsa}
$$ A(u,t,m_S):=  h^{-1}_d \left( \frac{2tu}{u-m_S} \right)\frac{u-m_S}{2} < 1/3$$
\end{lem}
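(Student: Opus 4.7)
The plan is to show that $A(u,t,m_S)$ is monotonically decreasing in $m_S$, then invoke Lemma \ref{msbecsles} to substitute $m_S$ by its lower bound $h_i^{-1}(t)u$, and finally verify the resulting bound over the rectangle $u \in [0.4400, 0.7100]$, $t \in [0.3600, 0.4525]$ inherited from the preceding claim (since outside this rectangle the first bound already gives us $\leq 0.5961$).

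For the monotonicity, I would observe that as $m_S$ increases with $u$ and $t$ held fixed, the linear factor $(u-m_S)/2$ decreases, while the argument $\frac{2tu}{u-m_S}$ of $h_d^{-1}$ increases. Since $h_d^{-1}$ is itself decreasing (as the inverse of $h$ restricted to $[1/2,1]$), the factor $h_d^{-1}\!\left(\frac{2tu}{u-m_S}\right)$ also decreases. Both factors being non-negative, their product is non-increasing in $m_S$. Hence, for fixed $u$ and $t$, the maximum of $A$ is attained at the smallest admissible value of $m_S$, which by Lemma \ref{msbecsles} is $h_i^{-1}(t)u$. Using the identity $1 - h_i^{-1}(t) = h_d^{-1}(t)$, plugging this in yields
\[ A(u,t,m_S) \leq h_d^{-1}\!\left( \frac{2t}{h_d^{-1}(t)} \right) \cdot \frac{u \cdot h_d^{-1}(t)}{2}. \]
One should also check that the argument of the outer $h_d^{-1}$ stays in $[0,1]$; this follows because the Lemma \ref{legjobbfn} bound $t \leq 0.4525$ gives $h_i^{-1}(t) \leq 1-2t$, equivalently $\frac{2t}{h_d^{-1}(t)} \leq 1$.

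The right-hand side is linear and strictly increasing in $u$, so it is maximized at $u = 0.7100$. The problem therefore reduces to verifying, for $t \in [0.3600, 0.4525]$, the single-variable inequality
\[ G(t) \;:=\; \frac{0.71}{2} \cdot h_d^{-1}\!\left( \frac{2t}{h_d^{-1}(t)} \right) \cdot h_d^{-1}(t) \;<\; \frac{1}{3}. \]

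The main obstacle is that $G$ is defined by a nested composition of inverses of the entropy function, so a clean analytic monotonicity argument is awkward; I would instead rely on a short numerical verification, consistent with the computer-assisted evaluations used elsewhere in Section 4. As sanity checks on the two endpoints: at $t = 0.4525$ we have $h_d^{-1}(t) \approx 0.905$ and $\frac{2t}{h_d^{-1}(t)} = 1$, so $G(0.4525) \approx 0.355 \cdot 0.5 \cdot 0.905 \approx 0.161$; at $t = 0.3600$, $h_d^{-1}(t) \approx 0.932$ and $\frac{2t}{h_d^{-1}(t)} \approx 0.773$, so $G(0.3600) \approx 0.355 \cdot 0.772 \cdot 0.932 \approx 0.255$. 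Both are safely below $1/3$, and a finite grid combined with a crude Lipschitz estimate on the short interval $[0.3600,0.4525]$ seals the inequality across the entire range.
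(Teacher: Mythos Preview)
Your argument is essentially the paper's: decrease in $m_S$, substitute the lower bound $m_S = h_i^{-1}(t)u$ from Lemma \ref{msbecsles}, then maximize over the rectangle in $(u,t)$. The only difference is at the last step. You call the monotonicity in $t$ ``awkward'' and fall back on a grid check, but in fact it is immediate: as $t$ increases, $h_i^{-1}(t)$ increases, so $1-h_i^{-1}(t)$ decreases and $\frac{2t}{1-h_i^{-1}(t)}$ increases; since $h_d^{-1}$ is decreasing, both factors in $G(t)$ go down. The paper uses exactly this observation, so the whole verification collapses to the single evaluation at $(u,t)=(0.7100,0.3600)$, which is your own computed value $\approx 0.255 < 1/3$. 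Your Lipschitz/grid plan is not wrong, just unnecessary.
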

\begin{proof}
Since $h^{-1}_d(x)$ is decreasing, $A(u,t,m_S)$ is trivially monotone decreasing in $m_S$ so after using $h^{-1}_i(t)u \leq m_S$ we get
$$ A(u,t,m_S) \leq h^{-1}_d \left( \frac{2t}{1-h^{-1}_i(t)} \right)\frac{u(1-h^{-1}_i(t))}{2} $$
which is trivially monotone decreasing in $t$ and monotone increasing in $u$ so substituting $u=0.7100$ and $t=0.3600$ and checking that 
$$ h^{-1}_d \left( \frac{0.72}{1-h^{-1}_i(0.36)} \right)\frac{0.71(1-h^{-1}_i(0.36))}{2} < \frac{1}{3} $$
finishes the proof. 
\end{proof}

We proceed with showing that in the second bound $m_A$ should be minimized. Note that for this it is 

\begin{claim}
For fixed $u \in [0.44,0.71]$ and $t\in [0.36,0.4525]$ and $m_S$, the bound $h(c-m_A)+m_A-tu-m_S$ is maximal if $m_A$ is minimal. 
\end{claim}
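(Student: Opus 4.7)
The plan is to treat the quantity $h(c-m_A)+m_A-tu-m_S$ as a function of $m_A$ alone (with $u$, $t$, $m_S$, and hence $c=(u+m_S)/2$ all fixed) and show that its derivative is negative throughout the admissible range of $m_A$. Let $g(m_A):=h(c-m_A)+m_A$; the terms $-tu-m_S$ are constants and can be ignored.

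Using $h'(x)=-\log_2\!\bigl(x/(1-x)\bigr)$ and the chain rule, I compute
\[
g'(m_A) \;=\; -h'(c-m_A) \cdot (-1) + 1 \;=\; \log_2\!\frac{c-m_A}{1-(c-m_A)} + 1 \;=\; \log_2\!\frac{2(c-m_A)}{1-c+m_A}.
\]
Hence $g'(m_A)\le 0$ precisely when $2(c-m_A)\le 1-(c-m_A)$, i.e.\ when $c-m_A\le 1/3$. So it suffices to verify that $c-m_A<1/3$ throughout the admissible range of $m_A$.

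The admissible range of $m_A$ is bounded below by Lemma~\ref{mabecsles}, which gives $c-m_A\le h_d^{-1}\!\bigl(tu/(u-c)\bigr)(u-c)$. Since $c=(u+m_S)/2$, we have $u-c=(u-m_S)/2$ and $tu/(u-c)=2tu/(u-m_S)$, so this upper bound is exactly
\[
h_d^{-1}\!\left(\frac{2tu}{u-m_S}\right)\frac{u-m_S}{2} \;=\; A(u,t,m_S),
\]
in the notation of Lemma~\ref{furcsa}. That lemma says $A(u,t,m_S)<1/3$ for the parameter ranges under consideration, giving $c-m_A<1/3$ as required. Therefore $g'(m_A)<0$ throughout, so $g$ is strictly decreasing in $m_A$, and the bound $h(c-m_A)+m_A-tu-m_S$ is maximized at the minimal value of $m_A$.

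There is no real obstacle here; the entire content is the identification $u-c=(u-m_S)/2$ that lets us invoke Lemma~\ref{furcsa}, plus a one-line calculus computation showing that the threshold for monotonicity of $h(c-m_A)+m_A$ is exactly $c-m_A=1/3$. The only mild subtlety is being careful about which branch of $h^{-1}$ is used in Lemma~\ref{mabecsles}: the relevant upper bound on $c-m_A$ (as opposed to a lower bound) is what one gets from the decreasing inverse $h_d^{-1}$, matching the expression in Lemma~\ref{furcsa}.
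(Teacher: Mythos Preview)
Your proof is correct and follows exactly the paper's approach: differentiate with respect to $m_A$, reduce the question to showing $c-m_A\le 1/3$, and then invoke Lemma~\ref{mabecsles} (rewritten via $u-c=(u-m_S)/2$) together with Lemma~\ref{furcsa}. One small typo: the chain-rule step should read $h'(c-m_A)\cdot(-1)+1=-h'(c-m_A)+1$, not $-h'(c-m_A)\cdot(-1)+1$; your subsequent expressions are already the correct ones.
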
 
\begin{proof}
To maximize $h(c-m_A)+m_A-tu-m_S$ we have to minimize $m_A$ if and only if its derivative with respect to $m_A$ is negative. Thus we need that 
$$-h'(c-m_A)+1 \leq 0 $$
which, since $h'(x)$ is decreasing and it attains one at $x=1/3$, is equivalent to 
$$c-m_A \leq 1/3. $$
By rearranging the statement of Lemma \ref{mabecsles} we have that 
$$c-m_A \leq h^{-1}_d \left( \frac{tu}{u-c} \right)(u-c)= $$
which we can rewrite using $u-c=(u-m_S)/2$ to get 
$$= h^{-1}_d \left( \frac{2tu}{u-m_S} \right)\frac{u-m_S}{2}$$
which is smaller than $1/3$ by Lemma \ref{furcsa}.
\end{proof}

Thus the second bound became 
$$ h\left(h_d^{-1} \left( \frac{tu}{u-c} \right) (u-c)\right)+c-h_d^{-1} \left( \frac{tu}{u-c} \right) (u-c)-tu-m_S =$$
$$ h \left( h_d^{-1} \left( \frac{2tu}{u-m_S} \right) \frac{u-m_S}{2} \right)+\frac{u-m_S}{2}-h_d^{-1} \left( \frac{2tu}{u-m_s} \right) \frac{u-m_S}{2}-tu =$$
$$ h(A(u,t,m_S))-A(u,t,m_S) +\frac{u-m_S}{2}-tu.$$
Now we show that $m_S$ should be minimized. 

\begin{claim} \label{asdasd}
For fixed $u,t$ the bound $ h(A(u,t,m_S))-A(u,t,m_S) +\frac{u-m_S}{2}-tu$ is maximal if $m_S$ is minimal. 
\end{claim}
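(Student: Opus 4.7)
The plan is to show that $F(m_S) := h(A(u,t,m_S)) - A(u,t,m_S) + \tfrac{u-m_S}{2} - tu$ is strictly decreasing in $m_S$ on the admissible range, so its maximum is attained at the smallest permitted $m_S$, which by Lemma \ref{msbecsles} is $h_i^{-1}(t)\,u$. I would decompose $F$ into the two summands $h(A)-A$ and $\tfrac{u-m_S}{2}-tu$, and argue that each is individually decreasing in $m_S$.

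The summand $\tfrac{u-m_S}{2}-tu$ is linear in $m_S$ with slope $-\tfrac{1}{2}$, so nothing needs to be checked there. For the summand $h(A)-A$, I would first verify that $A(u,t,m_S) = h_d^{-1}\!\bigl(\tfrac{2tu}{u-m_S}\bigr)\cdot\tfrac{u-m_S}{2}$ is itself decreasing in $m_S$: as $m_S$ grows, the second factor $\tfrac{u-m_S}{2}$ shrinks, the argument $\tfrac{2tu}{u-m_S}$ grows, and since $h_d^{-1}$ is the decreasing branch of the inverse entropy function, the first factor shrinks as well. Both factors are non-negative, so the product decreases.

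Next I would invoke Lemma \ref{furcsa}, which pins down $A < 1/3$, together with the elementary computation $(h(x)-x)' = h'(x)-1 = \log_2\!\bigl(\tfrac{1-x}{x}\bigr)-1$, which is strictly positive on $(0,1/3)$. Hence $h(A)-A$ is an increasing function of $A$ on the relevant range; composed with the decreasing $m_S \mapsto A(u,t,m_S)$, this makes $h(A)-A$ decreasing in $m_S$. Adding this to the already decreasing linear term establishes that $F$ is strictly decreasing in $m_S$, which is exactly what the claim asks for.

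The one place that requires a nontrivial input is the appeal to Lemma \ref{furcsa}: without the bound $A<1/3$, the sign of $h'(A)-1$ is ambiguous and the monotonicity of $h(A)-A$ could reverse, so the claim would genuinely fail outside that regime. This is exactly why Lemma \ref{furcsa} was established beforehand, and it is the same ingredient that drove the analogous minimization argument for $m_A$ in the previous claim.
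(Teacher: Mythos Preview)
Your proposal is correct and follows essentially the same route as the paper: both decompose the bound into the linear part $\tfrac{u-m_S}{2}-tu$ and the part $h(A)-A$, observe that $A(u,t,m_S)$ is decreasing in $m_S$, and then use Lemma~\ref{furcsa} to ensure $A<1/3$ so that $h(x)-x$ is increasing on the relevant range. You supply a bit more detail (the explicit factor-by-factor argument for the monotonicity of $A$ and the derivative computation for $h(x)-x$), but the structure and the key input are identical to the paper's proof.
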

\begin{proof}
The $\frac{u-m_S}{2}-tu$ part is monotone decreasing in $m_S$, for the $h(A(u,t,m_S))-A(u,t,m_S)$ part observe that $A(u,t,m_S)$ is monotone decreasing in $m_S$ and the function $h(x)-x$ is increasing if $x \leq 1/3$ which inequality is guaranteed by Lemma \ref{furcsa}.
\end{proof}

So the second bound takes the form
$$ h \left( h_d^{-1} \left( \frac{2t}{1-h_i^{-1}(t)} \right) \frac{u}{2}(1-h_i^{-1}(t)) \right)-h_d^{-1} \left( \frac{2t}{1-h_i^{-1}(t)} \right) \frac{u}{2}(1-h_i^{-1}(t))+\frac{u}{2}(1-h_i^{-1}(t)-2t). $$
From the proof of Claim \ref{asdasd} we see that the second bound is monotone decreasing in $t$, and is monotone increasing in $u$ since the positivity of $1-h_i^{-1}(t)-2t$ is guaranteed by Lemma \ref{legjobbfn}. 

Note that $h(u)+tu$ is trivially increasing in $t$. By the monotonicity properties of our bounds, evaluating the first and the double of the second bound alternately (starting with the second) in the points below, we can deduce that for any $u \in [0.4400,0.7100] $ and $ t\in [0.3600,0.4525]$ one of the bounds is smaller than $2.284$. Note that if we connect the points, the resulting shape resembles a staircase. The points $(u_i,t_i)$ are: $(0.5893, 0.36),$ 
$ (0.5893, 0.364) , (0.599, 0.364) , (0.599, 0.367), (0.607, 0.367) , (0.607, 0.37) , (0.615, 0.37), $ \\
$ (0.615, 0.374) , (0.627, 0.374) , (0.627, 0.38), (0.645, 0.38),(0.645, 0.392), (0.688, 0.392), $ \\
$(0.688, 0.43),(0.71, 0.43),(0.71, 0.4525).$

\section{Concluding remarks and open problems}

Note that in the last section we proved that for a certain range of parameters, $|\mathcal{A}||\mathcal{B}| < 2^{(h(u)+tu)n}$ (where the second bound is stronger than the first one) which contradicts Theorem \ref{equality} showing that this range of parameters is impossible to achieve by a recovering pair. Note that there are completely uniform recovering pairs with parameters $u \in [0,1]$ and $t=0$, but we do not know whether $t>0$ is possible at all. From a proof that $t>0$ is impossible, or a proof that presents an upper bound that contradicts Theorem \ref{equality} for a large enough range of parameters $u$ and $t$, would follow Simonyi's conjecture. A less ambitious way of improving our results would be to improve the upper bound on $f(n)$.

\begin{question}
$$\lim_{n \rightarrow \infty}(f(n))^{1/n}=?  $$
\end{question} 

We know that this quantity is in the interval $[1.2009,1.3685]$, we call this less ambitious, as the lower bound shows that this method can not prove Simonyi's conjecture without additional ideas. Intuitively speaking, $f(n)$ prevents the concentration of the unions on a single set. It would be interesting to have a lemma that prevents the concentration of the unions on some sets close to each other. Another interesting way to prevent the concentration of sets is to punish large intersections, the sum proposed by Aharoni does exactly this. Its asymptotic exponent would also be of independent interest. 

\begin{question}
What is the maximal asymptotic exponent of the sum 
$$ \sum_{\substack{A_i \in \mathcal{A} \\ B_j \in \mathcal{B}}}2^{|A_i \cap B_j|} $$
where $(\mathcal{A}, \mathcal{B})$ is a recovering pair?
\end{question}

We know that this quantity is somewhere in the interval $[2.0153,3]$. Let us finish with a question that is more of an invitation to produce a "counterexample". Can $t$ be larger than zero? From a negative answer to this question would immediately follow Conjecture \ref{Simonyi}, and a positive answer would be interesting too, since in any counterexample to Conjecture \ref{Simonyi}, $t$ is necessarily positive. 

\section{Acknowledgements}

The author would like to thank Gábor Simonyi, Márton Zubor and Ron Aharoni for stimulating discussions. 

\begin{bibdiv}
\begin{biblist}

\bib{dustin}{book}{
  title={A Collection of Results on Simonyi's Conjecture},
  author={Dustin Styner},
	date={2012},
	publisher={MSc thesis},
}

\bib{simonyisali}{article}{
  title={Recovering Set Systems and Graph Entropy },
  author={Gábor Simonyi},
	author={Attila Sali},
	journal={Combinatorics, Probability and Computing },
	volume={6},
	pages={481-491},
  date={1997},
}
\bib{csiszar}{book}{
  title={Information Theory: Coding Theorems for Discrete Memoryless Systems},
  author={Imre Csiszár},
	author={János Körner},
	date={2011},
	publisher={Cambridge University Press},
	address={Cambridge},
}
\bib{csakany}{article}{
  title={Some results on the sandglass conjecture},
  author={Rita Csákány},
	journal={Electronic Notes in Discrete Mathematics },
	volume={5},
	pages={81-84},
  date={2000},
}
\bib{private}{article}{
  title={Private communication},
  author={Ron Aharoni},
	date={2015},
}
\bib{kornerholzman}{article}{
  title={Cancellative Pairs of Families of Sets},
  author={Ron Holzman},
	author={János Körner},
	journal={European Journal of combinatorics},
	volume={16},
	pages={263-266},
  date={1995},
}
\bib{ahlswedesimonyi}{article}{
  title={Note on the optimal structure of recovering set pairs in lattices: the sandglass conjecture},
  author={Rudolf Ahlswede},
	author={Gábor Simonyi},
	journal={Discrete Mathematics},
	volume={128},
	pages={389-394},
  date={1994},
}

\bib{related}{article}{
  title={Strongly cancellative and recovering sets on lattices},
  author={Shinnyih Huang},
	author={Hoda Bidkhori},
	journal={\tt arxiv:0909.2817 [math.CO]},
  date={1997},
}
\bib{hoeffding}{article}{
  title={Probability inequalities for sums of bounded random variables},
  author={W. Hoeffding},
	journal={Journal of the American Statistical Association},
	volume={58},
	pages={13-30},
  date={1963},
}

\end{biblist}
\end{bibdiv}

\end{document}